\providecommand{\U}[1]{\protect\rule{.1in}{.1in}}
\newtheorem{theorem}{Theorem}
\newtheorem{definition}[theorem]{Definition}
\newtheorem{proposition}[theorem]{Proposition}
\newtheorem{remark}[theorem]{Remark}
\newcommand{\C}{\mathbb{C}}
\newcommand{\D}{\mathbb{D}}
\renewcommand{\Re}{\operatorname{Re}}
\begin{document}

\title{Continuation of relative equilibria in the $n$--body
problem to spaces of constant curvature}
\author{A. Bengochea, C. Garc\'ia-Azpeitia, E. P\'erez-Chavela, P. Roldan}
\maketitle

\begin{abstract}
	We prove that all non-degenerate relative equilibria of the planar
	Newtonian $n$--body problem can be continued to spaces of constant
	curvature $\kappa$, positive or negative, for small enough values of this
	parameter.
	We also compute the extension of some classical relative
	equlibria to curved spaces using numerical continuation. In particular, we
	extend Lagrange's triangle configuration with different masses to both
	positive and negative curvature spaces.

Keywords: relative equilibria, spaces of constant curvature, curved $n$--body problem, Palais slice coordinates.

Subjclass[2020] {70F10, 70F07, 70F15, 34A25}

\end{abstract}

\section{Introduction}

The curved $n$--body problem is a natural extension of the planar Newtonian $%
n$--body problem to surfaces of nonzero constant curvature. The origin of this
problem is in the Kepler and the two body problem on curved spaces,
first studied independently by Lobachevsky and Bolyai (co-discoverers of the
first non-Euclidean geometries), in the 1830's. Since then, many authors
have worked on this problem, developed their original geometric ideas and
expressed them in analytical and modern language; among others, 
E. Shering, H. Liebmann, P. Serr\'e, and many names from the
Russian school (see \cite{Diacu} and \cite{Bor2} for a nice historical
review on this problem).

In \cite{DPS12}, the Kepler and the two--body
problem was extended to the curved $n$--body problem on spaces of positive and negative
curvature in an unified way. In this paper the authors show that
the Lagrangian equilateral triangle solutions exist on spaces of constant
curvature if and only if the three masses are equal. 
Since then, a natural and pressing question emerged: Is it possible to extend
\emph{any} type of planar Newtonian relative equilibrium (curvature $\kappa = 0$) to
surfaces of constant curvature different from zero? In particular, what happens
with the Lagrangian solutions of the Newtonian problem when all masses are not
equal? Can they be continued to curved surfaces? 

Most researchers in the field though that this conjecture should be true,
but it has remained an open problem until today. In this paper
we finally present two proofs of this conjecture and a numerical continuation
procedure to compute relative equilibria for negative and positive curvature.

In recent years, relative equilibria on curved spaces have received
considerable attention; we now mention some related works.
For the case of
$n=3$ equal masses, the authors in \cite{Simo1} perform an extensive
analysis of the stability of the Lagrangian relative equilibria on spaces of
positive curvature, showing all possible bifurcation values and the regions on
the sphere where they are stable.  More generally, they study the homographic
orbits generated by this kind of relative equilibria.  In \cite{Simo2} the
same authors study relative equilibria of the restricted three-body problem,
when the two primary masses form a relative equilibrium.
For the case of $n=2$ masses, the authors in~\cite{Garcia}
perform an interesting analysis of the relative equilibria on spaces of
constant curvature using reduction and techniques from geometric mechanics,
and rediscover some results stated and proved first in \cite{GarciaN-Marrero},
but with simpler proofs.
In \cite{Er-Juan2} the authors show an inverse result for the collinear curved
$5$ and $7$ problem, where collinear means that all masses are located on the
same geodesic for all time. The same authors in \cite{Er-Juan1} show a local
and general regularization result for the restricted $n$--body problem, when the
$n$ primary equal masses form a relative equilibria.

Notice that all these previous works deal with particular cases, while our
present result applies to any non-degenerate relative equilibria (arbitrary number of bodies,
unequal masses, etc.). On the other hand, our result is perturbative and
applies only for small enough curvature.

The paper is organized as follows: In Section \ref%
{Section 2}, starting from a unified model for the curved $n$--body
problem, we express the corresponding Lagrangian in rotating coordinates and
obtain the kinetic and potential energy as an expansion in terms of the
curvature $\kappa$. In Section \ref{Section 3} we prove the continuation of
relative equilibria to spaces of constant curvature using Palais slice
coordinates.
In Section \ref{Section 4} we give an alternative proof using Lagrange
multipliers that is better suited for computations. Finally in Section
\ref{Section 5} we implement the continuation numerically and show several
examples, both for positive and negative
curvatures.

\section{Unified model for the $n$--body problem in spaces of constant
curvature}

\label{Section 2}

In this section we write the Lagrangian for the curved $n$--body problem for
positive and negative curvature $\kappa$%
, and we show that it can be written in a unified way for all $\kappa$
(positive, negative, and zero).

\subsection{Lagrangian of $n$ bodies on the sphere of positive curvature $%
\protect\kappa$}

From \cite{Pe}, the Lagrangian for $n$ bodies in the sphere of radius $R$,
where the position of the body with mass $m_{j}$ is parametrized by the
stereographic projection $z_{j}\in\mathbb{C}$, is given by 
\begin{equation}  \label{Lagrangian}
L(z,\dot{z};R)=T(z,\dot{z})+U(z),
\end{equation}
where the kinetic and potential energies are%
\begin{equation}
T=\frac{1}{2}\sum_{j=1}^{n}m_{j}\lambda(z_{j})\left\vert \dot{z}%
_{j}\right\vert ^{2},\qquad U=\sum_{j<k}m_{k}m_{j}V(z_{j},z_{k})\text{.}
\label{T-U}
\end{equation}
The function $\lambda$ is the conformal factor coming from the stereographic
projection through the north pole of the sphere into the plane of the ecuator,
and the potential $V$ corresponds to the mutual attraction among
the particles. Both functions are analytic in $z_{j}$ and $\bar{z}_{j}$
(i.e. analytic as a function of the real coordinates $x_{j}$ and $y_{j}$)
and given by%
\begin{align*}
V(z_{j},z_{k}) & =\frac{1}{R}\frac{\frac{4\Re\left( z_{j}\bar{z}_{k}\right) 
}{R^{2}}+\left( \frac{\left\vert z_{k}\right\vert ^{2}}{R^{2}}-1\right)
\left( \frac{\left\vert z_{j}\right\vert ^{2}}{R^{2}}-1\right) }{\sqrt{%
\left( \frac{\left\vert z_{k}\right\vert ^{2}}{R^{2}}+1\right) ^{2}\left( 
\frac{\left\vert z_{j}\right\vert ^{2}}{R^{2}}+1\right) ^{2}-\left( \frac{%
4\Re\left( z_{j}\bar{z}_{k}\right) }{R^{2}}+\left( \frac{\left\vert
z_{k}\right\vert ^{2}}{R^{2}}-1\right) \left( \frac{\left\vert
z_{j}\right\vert ^{2}}{R^{2}}-1\right) \right) ^{2}}}, \\
\lambda(z_{j}) & =\frac{4}{\left( 1+\frac{\left\vert z_{j}\right\vert ^{2}}{%
R^{2}}\right) ^{2}}\text{.}
\end{align*}

The positive curvature $\kappa$ depends on the radius $R$ as
\begin{equation*}
\kappa=\frac{1}{R^{2}}\text{.} 
\end{equation*}
Thus the functions $V$ and $\lambda$ expressed in terms of the curvature are 
\begin{align}
V(z_{j},z_{k}) & =\frac{4\Re\left( z_{j}\bar{z}_{k}\right) \kappa+\left(
\left\vert z_{k}\right\vert ^{2}\kappa-1\right) \left( \left\vert
z_{j}\right\vert ^{2}\kappa-1\right) }{\sqrt{\kappa^{-1}\left( \left\vert
z_{k}\right\vert ^{2}\kappa+1\right) ^{2}\left( \left\vert z_{j}\right\vert
^{2}\kappa+1\right) ^{2}-\kappa^{-1}\left( 4\Re\left( z_{j}\bar{z}%
_{k}\right) \kappa+\left( \left\vert z_{k}\right\vert ^{2}\kappa-1\right)
\left( \left\vert z_{j}\right\vert ^{2}\kappa-1\right) \right) ^{2}}},
\label{eq:Vpos}\\
\lambda(z_{j}) & =\frac{4}{\left( 1+\kappa\left\vert z_{j}\right\vert
^{2}\right) ^{2}}\text{.}
	\label{eq:lambdapos}
\end{align}

\subsection{Lagrangian of $n$ bodies on the hyperbolic sphere of negative
curvature $\protect\kappa<0$.}

From \cite{Pe2}, the Lagrangian for $n$ bodies in the hyperbolic sphere of
radius $R$, where the position of the body with mass $m_{j}$ is parametrized
by the stereographic projection $z_{j}\in\mathbb{C}$, is given by 
\begin{equation}
L(z,\dot z;R)=T(z, \dot z)+U(z),
\end{equation}
where the kinetic and potential energies are defined by (\ref{T-U}) with the
functions $\lambda$ and $V$ given by%
\begin{align*}
V(z_{j},z_{k}) & =\frac{\frac{-4\Re\left( z_{j}\bar{z}_{k}\right) }{R^{2}}%
+\left( -\frac{\left\vert z_{k}\right\vert ^{2}}{R^{2}}-1\right) \left( -%
\frac{\left\vert z_{j}\right\vert ^{2}}{R^{2}}-1\right) }{\sqrt{-R^{2}\left(
-\frac{\left\vert z_{k}\right\vert ^{2}}{R^{2}}+1\right) ^{2}\left( -\frac{%
\left\vert z_{j}\right\vert ^{2}}{R^{2}}+1\right) ^{2}+R^{2}\left( \frac{%
-4\Re\left( z_{j}\bar {z}_{k}\right) }{R^{2}}+\left( -\frac{\left\vert
z_{k}\right\vert ^{2}}{R^{2}}-1\right) \left( -\frac{\left\vert
z_{j}\right\vert ^{2}}{R^{2}}-1\right) \right) ^{2}}}, \\
\lambda(z_{j}) & =\frac{4}{\left( 1-\frac{\left\vert z_{j}\right\vert ^{2}}{%
R^{2}}\right) ^{2}}\text{.}
\end{align*}
Here, the stereographic projection through the north pole of the hyperbolic
sphere is used to move the problem to the Poincar\'e disk $\D_R^2$.

In this case the curvature is given by 
\begin{equation*}
\kappa=-\frac{1}{R^{2}}\text{.} 
\end{equation*}
Hence the functions $V$ and $\lambda$ expressed in terms of the curvature
have exactly the same form as in the case of positive curvature~\eqref%
{eq:Vpos}, \eqref{eq:lambdapos}.

\subsection{Unified model}

Notice that the expression of the Lagrangian coincides for positive and
negative curvature and is given by equation \eqref{eq:Vpos}. In the
following proposition we simplify the expression of the potential $V$.

\begin{proposition}
The function $V(z_{j},z_{k};\kappa)$ is analytic for $z_{j} \neq z_{k}$ and $%
\left\vert \kappa \right\vert <\kappa _{0}$ with $\kappa _{0}$ a positive
constant, because%
\begin{equation}  \label{VS}
V(z_{j},z_{k})=\frac{4\Re\left( z_{j}\bar{z}_{k}\right) \kappa+\left(
\left\vert z_{k}\right\vert ^{2}\kappa-1\right) \left( \left\vert
z_{j}\right\vert ^{2}\kappa-1\right) }{2\left\vert z_{j}-z_{k}\right\vert
\left( \left\vert z_{k}\right\vert ^{2}\left\vert z_{j}\right\vert
^{2}\allowbreak\kappa^{2}+2\Re\left( z_{j}\bar{z}_{k}\right)
\allowbreak\kappa+1\right) ^{1/2}}.
\end{equation}
\end{proposition}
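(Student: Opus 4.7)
The plan is to simplify the radicand in the original denominator by a difference-of-squares factorization. Write
\[
A := (|z_k|^2\kappa+1)(|z_j|^2\kappa+1), \qquad N := 4\Re(z_j\bar z_k)\kappa + (|z_k|^2\kappa-1)(|z_j|^2\kappa-1),
\]
so the expression under the square root in \eqref{eq:Vpos} is $\kappa^{-1}(A^2-N^2) = \kappa^{-1}(A-N)(A+N)$, and the numerator of $V$ is exactly $N$. The goal is to show $A-N$ contributes the factor $2\kappa|z_j-z_k|^2$ (cancelling the $\kappa^{-1}$) and $A+N$ contributes the remaining bracket.

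The first step is a direct expansion. Writing $A = |z_j|^2|z_k|^2\kappa^2 + (|z_j|^2+|z_k|^2)\kappa + 1$ and $N = |z_j|^2|z_k|^2\kappa^2 - (|z_j|^2+|z_k|^2)\kappa + 1 + 4\Re(z_j\bar z_k)\kappa$, one immediately obtains
\[
A - N = 2\kappa\bigl(|z_j|^2 + |z_k|^2 - 2\Re(z_j\bar z_k)\bigr) = 2\kappa|z_j-z_k|^2,
\]
\[
A + N = 2\bigl(|z_j|^2|z_k|^2\kappa^2 + 2\Re(z_j\bar z_k)\kappa + 1\bigr).
\]
Substituting back, $\kappa^{-1}(A^2-N^2) = 4|z_j-z_k|^2\bigl(|z_j|^2|z_k|^2\kappa^2 + 2\Re(z_j\bar z_k)\kappa + 1\bigr)$, and taking the principal square root yields precisely the denominator of \eqref{VS}. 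The $\kappa^{-1}$ is harmless because it cancels with the factor $\kappa$ produced by $A-N$, so the formula extends analytically across $\kappa=0$.

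For the analyticity claim, the numerator $N$ is a real polynomial in $\kappa$ and in the real and imaginary parts of $z_j,z_k$, hence entire in those variables. The quantity $|z_j-z_k|$ is real-analytic away from the collision set $z_j=z_k$. Finally, for $|z_j|,|z_k|$ in any bounded region, the factor $|z_j|^2|z_k|^2\kappa^2 + 2\Re(z_j\bar z_k)\kappa + 1$ tends to $1$ as $\kappa\to 0$ uniformly, so there is $\kappa_0>0$ such that this factor stays bounded away from $0$ (and positive) for $|\kappa|<\kappa_0$; hence its square root is real-analytic in all arguments. Combining these observations gives the analyticity of $V$ on the stated domain.

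The calculation is entirely algebraic, and the only real content is noticing the factorization $A^2-N^2=(A-N)(A+N)$ with the miraculous cancellation $A-N = 2\kappa|z_j-z_k|^2$; no genuine obstacle is anticipated beyond keeping track of signs during the expansion.
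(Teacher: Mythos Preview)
Your proof is correct and follows exactly the same approach as the paper: the paper's proof simply asserts the identity $A^2-N^2 = 4\kappa|z_j-z_k|^2\bigl(|z_k|^2|z_j|^2\kappa^2+2\Re(z_j\bar z_k)\kappa+1\bigr)$ ``by straightforward computations,'' while you spell out the difference-of-squares factorization and the evaluations of $A-N$ and $A+N$ explicitly. Your added paragraph on analyticity is also a welcome elaboration of what the paper leaves implicit.
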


\begin{proof}
By straightforward computations we have that 
\begin{align*}
& \left( \kappa\left\vert z_{j}\right\vert ^{2}+1\right) ^{2}\left(
\kappa\left\vert z_{k}\right\vert ^{2}+1\right) ^{2}-\left( 4\kappa
\Re\left( z_{j}\bar{z}_{k}\right) +\left( \kappa\left\vert z_{j}\right\vert
^{2}-1\right) \left( \kappa\left\vert z_{k}\right\vert ^{2}-1\right) \right)
^{2} \\
& =4\kappa\left\vert z_{j}-z_{k}\right\vert ^{2}\allowbreak\left( \left\vert
z_{k}\right\vert ^{2}\left\vert z_{j}\right\vert ^{2}\allowbreak\kappa
^{2}+2\Re\left( z_{j}\bar{z}_{k}\right) \allowbreak \kappa+1\right),
\end{align*}
so the function~\eqref{eq:Vpos} can be written as~\eqref{VS}. Thus $V$ is
analytic in $\kappa$ for $z_{j}\neq z_{k}$.
\end{proof}

Therefore we define the unified Lagrangian for $\kappa \in \mathbb{R}$ as $%
L(z,\dot{z};\kappa )=T(z,\dot{z})+U(z)$, where the kinetic energy $T$ and
potential energy $U$ are defined by (\ref{T-U}), and $V$ in (\ref{VS}).
Notice that when $\kappa =0$, we recover the planar Newtonian $n$--body
problem.

Next we express the Lagrangian in rotating coordinates $u_{j}\in\mathbb{C}$
for $j=1,\dotsc,n$. This will be useful for finding relative equilibria in
the following section.

\begin{proposition}
\label{Prop} Define the change of variables 
\begin{equation*}
z_{j}(t)=e^{it}u_{j}(t).
\end{equation*}%
Then the Lagrangian in rotating coordinates $u_{j}(s)$ is given by $L(u,\dot{%
u};\kappa )=T(u,\dot{u};\kappa )+U(u;\kappa )$, where%
\begin{align*}
T(u,\dot{u};\kappa )& =\frac{1}{2}\sum_{j=1}^{n}m_{j}\left\vert \left(
\partial _{t}+i\right) u_{j}\right\vert ^{2}\left( 4+\mathcal{O}(\kappa
^{2})\right) , \\
U(u;\kappa )& =\frac{1}{2}\sum_{j<k}m_{k}m_{j}\frac{1}{\left\vert
u_{k}-u_{j}\right\vert }+\mathcal{O}(\kappa )\text{.}
\end{align*}%
Here $\mathcal{O(}\kappa ^{n})$ are analytic functions of order $\kappa ^{n}$
that depend only on $u=(u_{1},...,u_{n})$.
\end{proposition}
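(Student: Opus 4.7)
The plan is to prove the proposition by direct substitution of $z_j = e^{it}u_j$ into the unified Lagrangian, followed by Taylor expansion of the coefficients in $\kappa$ around $\kappa = 0$. The key observation that makes the computation clean is that the Lagrangian depends on the positions only through the rotation-invariant quantities $|z_j|^2$, $|z_j - z_k|$, and $\Re(z_j\bar z_k)$: multiplication by $e^{it}$ leaves all three of these unchanged, so in the variables $u_j$ the Lagrangian has the same algebraic form but with additional terms coming from the velocity substitution.

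First I would handle the kinetic energy. Differentiating gives $\dot z_j = e^{it}(\dot u_j + iu_j) = e^{it}(\partial_t + i)u_j$, so $|\dot z_j|^2 = |(\partial_t + i)u_j|^2$. Combined with $|z_j|^2 = |u_j|^2$, the conformal factor becomes $\lambda(z_j) = 4/(1+\kappa|u_j|^2)^2$, which is analytic at $\kappa = 0$ with value $4$. Taylor expanding this factor in $\kappa$ and inserting into $T = \tfrac{1}{2}\sum m_j\lambda(z_j)|\dot z_j|^2$ yields the stated expression, where the error term is an analytic function of $\kappa$ depending only on $u$ (through $|u_j|^2$).

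For the potential I would use equation \eqref{VS} from the preceding proposition, which already puts $V$ in a form that is manifestly analytic in $\kappa$ for $z_j \neq z_k$. Applying the rotation-invariant identities
\begin{equation*}
|z_j|^2 = |u_j|^2,\qquad |z_j - z_k| = |u_j - u_k|,\qquad \Re(z_j\bar z_k) = \Re(u_j\bar u_k),
\end{equation*}
one sees that $V(z_j,z_k;\kappa) = V(u_j,u_k;\kappa)$ as a function of $u$ alone. Evaluating at $\kappa = 0$ gives numerator equal to $(-1)(-1) = 1$ and denominator equal to $2|u_j - u_k|$, so $V(u_j,u_k;0) = 1/(2|u_j-u_k|)$. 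Analyticity in $\kappa$ then guarantees $V(u_j,u_k;\kappa) = 1/(2|u_j-u_k|) + \mathcal{O}(\kappa)$ with the correction depending only on $u$, and summing over pairs with weights $m_jm_k$ produces the claimed expression for $U$.

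Since every step reduces to either a trivial algebraic substitution or a Taylor expansion of an explicit analytic function, I do not anticipate a real obstacle; the only substantive content is recognizing the rotation invariance, which collapses the change of coordinates for the positions to the identity and confines all nontrivial dependence to the velocity term via $(\partial_t + i)u_j$.
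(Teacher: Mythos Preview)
Your proposal is correct and follows essentially the same route as the paper: the paper's proof also reduces to (i) rotation invariance of the expression \eqref{VS} for $V$ together with its Taylor expansion in $\kappa$, and (ii) the identity $|\dot z_j|^2=|(\partial_t+i)u_j|^2$ combined with $\lambda(z_j)=\lambda(u_j)$ and its expansion about $\kappa=0$. Your write-up is simply a more explicit version of the paper's two-sentence argument.
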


\begin{proof}
The statement for the potential energy follows from the invariance under
rotations for expression (\ref{VS}), and by an expansion in power series of $%
\kappa $. The result for the Kinetic energy follows from the fact that $%
\lambda (z_{j})=\lambda \left( u_{j}\right) =4+\mathcal{O}(\kappa ^{2})$, and
that%
\begin{equation*}
\left\vert \partial _{t}z_{j}(t)\right\vert ^{2}=\left\vert e^{it}\left(
\partial _{t}u_{j}+iu_{j}\right) \right\vert ^{2}=\left\vert \left( \partial
_{t}+i\right) u_{j}\right\vert ^{2}.
\end{equation*}
\end{proof}

\section{Continuation of relative equilibria to spaces of constant curvature}

\label{Section 3}

In this section we state our main result. First we recall some basic aspects of
central configurations and relative equilibria.

\subsection{Central configurations in the plane}

Central configurations for the classical planar Newtonian $n$--body problem
are special positions of the particles where the position and acceleration
vectors of each particle are proportional, with the same constant of
proportionality for every particle. Central configurations play an important
role in celestial mechanics (see for instance \cite{Saari} and references
therein), but the main property of central configurations is 
that they generate the only known explicit solutions of the $n$--body
problem. These are known as homographic solutions and are given by 
\begin{equation*}
\gamma(t)=R(t)\Omega(\omega t)a, 
\end{equation*}
where $R(t)$ is a non-negative scalar function, $\Omega(\omega t)\in SO(2)$
is a rotation matrix with angular frequency $\omega$, and $a$ is a central
configuration.

A periodic solution of this form with $R(t) \equiv 1$ is called a
\emph{relative equilibrium}, since in a uniformly rotating frame it becomes an equilibrium point, or a steady solution of the corresponding flow.
Normalizing the constant of proportionality in the analysis of central
configurations, we define them as follows:

\begin{definition}
\label{def}A \textbf{central configuration} $a=(a_{1},\dots ,a_{n})\in 
\mathbb{R}^{2n}$ is a particular position of the particles on the plane which
	verifies the equations%
\begin{equation}
4a_{j}=\frac{1}{2}\sum_{k\neq j}m_{k}\frac{a_{j}-a_{k}}{\left\Vert
	a_{j}-a_{k}\right\Vert ^{3}}\qquad \textrm{for $j=1,\dotsc,n$.}  \label{cc}
\end{equation}
\end{definition}

\begin{remark}
\label{remark} If in Proposition \ref{Prop}, one instead makes the change the
	variables
\begin{equation*}
z_{j}(t)=e^{2^{-3/4}\omega it}u_{j}(t),
\end{equation*}%
where $\omega \in \mathbb{R}$ is a parameter, then the kinetic energy takes
the form 
\begin{equation}
\tilde{T}(u;\kappa )=\frac{1}{2}\sum_{j=1}^{n}m_{j}\frac{\omega ^{2}}{2}%
\lambda (u_{j})\left\vert u_{j}\right\vert ^{2}\text{,}
\end{equation}%
which follows from%
\begin{equation*}
\left\vert \partial _{t}z_{j}(t)\right\vert ^{2}=\left\vert \left( \partial
_{s}+2^{-3/4}\omega i\right) u_{j}\right\vert ^{2}.
\end{equation*}%
In this case, the Lagrangian is $L(u;\kappa )=\tilde{T}(u)+U(u)$ and a
critical point of $\nabla L(u;0)$ satisfies the classical definition of a
central configuration%
\begin{equation}
\omega ^{2} a_{j}=\sum_{k\neq j}m_{j}\frac{a_{j}-a_{k}}{\left\Vert
a_{j}-a_{k}\right\Vert ^{3}}.
\end{equation}
\end{remark}

In order to study relative equilibria as steady solutions in uniformly rotating
coordinates, we write the Lagrangian $L(u)=T(u)+U(u)$ in real coordinates $u\in
\mathbb{R}^{2n}$.
For complex variables $u_{j}=x_j + i y_j \in \mathbb{C}$, 
\begin{equation*}
\Re (u_{j}\bar{u}_{k})=x_{j}x_{k}+y_{j}y_{k},
\end{equation*}%
so the expressions \eqref{eq:Vpos}, \eqref{eq:lambdapos} for $\lambda $ and $V$
in real components are given by 
\begin{align}
V(u_{j},u_{k})& =\frac{4\left( u_{j}\cdot u_{k}\right) \kappa +\left(
\left\vert u_{k}\right\vert ^{2}\kappa -1\right) \left( \left\vert
u_{j}\right\vert ^{2}\kappa -1\right) }{2\left\vert u_{j}-u_{k}\right\vert
\left( \left\vert u_{k}\right\vert ^{2}\left\vert u_{j}\right\vert
^{2}\kappa ^{2}+2\left( u_{j}\cdot u_{k}\right) \kappa +1\right) ^{1/2}},
\label{RT} \\
\lambda (u_{j})& =\frac{4}{\left( 1+\kappa \left\vert u_{j}\right\vert
^{2}\right) ^{2}}\text{.}  \label{Rl}
\end{align}%
Let $\Omega \subset \mathbb{R}^{2n}$ be a given open collision-less
subset. For steady solutions, the Lagrangian obtained in Proposition %
\ref{Prop} in real components simplifies to 
\begin{align*}
L(u;\kappa )& :\Omega \subset \mathbb{R}^{2n}\times \mathbb{R}\rightarrow 
\mathbb{R}, \\
L(u;\kappa )& =\frac{1}{2}\sum_{j=1}^{n}4m_{j}\left\vert u_{j}\right\vert
^{2}+\frac{1}{2}\sum_{j<k}m_{k}m_{j}\frac{1}{\left\vert
u_{k}-u_{j}\right\vert }+\mathcal{O}(\kappa )\text{.}
\end{align*}

We define the action of the symmetry group $\theta\in G:=SO(2)$ in the space 
$u\in\mathbb{R}^{2n}$ according to 
\begin{equation*}
\theta\cdot u=e^{\mathcal{J\theta}}u~,
\end{equation*}
where $\mathcal{J}=J\oplus...\oplus J$ with 
\begin{equation}\label{eq:sympl_matrix}
J:=\left( 
\begin{array}{cc}
0 & -1 \\ 
1 & 0%
\end{array}
\right) \simeq i~. 
\end{equation}
It is easy to see that the Lagrangian action $L(u;\kappa)$ is invariant under
rotations, and the gradient is equivariant, that is:%
\begin{equation*}
L(\theta\cdot u;\kappa)=L(u),\qquad\nabla L(\theta\cdot u;\kappa)=\theta
\cdot\nabla L(u)\text{.} 
\end{equation*}

The gradient of $L(u;0)$ with respect to $u=(u_{1},...,u_{n})$ has components%
\begin{equation*}
\nabla _{u_{j}}L(u;0)=4m_{j}u_{j}\,-\frac{1}{2}\sum_{k=1(k\neq
j)}^{n}m_{j}m_{k}\frac{u_{j}-u_{k}}{\left\Vert u_{j}-u_{k}\right\Vert ^{3}}%
\text{.}
\end{equation*}%
Central configurations are critical points of the unperturbed gradient, $%
\nabla L(u;0)=0$. Therefore, the gradient $\nabla L(u;0)$ vanishes along all
the orbit of the central configuration $a$, 
\begin{equation*}
G(a)=\{\theta \cdot a=e^{\mathcal{J}\theta }a:\theta \in G\}\text{.}
\end{equation*}%
Next we show that these orbits of solutions persist when considering
the perturbation term $\mathcal{O}(\kappa )$ for small $\kappa $.

\subsection{Continuation to spaces of constant curvature using Palais slice
coordinates}

The variational formulation allows us to prove 
persistence of a relative equilibrium as the $SO(2)$-orbit of a central
configuration $a$ for small $\kappa$. For this purpose, we define the $\rho $%
-neighbourhood of radius $\rho$ around the group orbit $G(a)$, 
\begin{equation*}
\Omega=\{u\in\mathbb{R}^{2n}\mid\left\Vert u-\theta\cdot a\right\Vert
<\rho,~~\theta\in G\}\text{.} 
\end{equation*}

The orbit $G(a)$ is a differentiable manifold. The tangent component to the
orbit manifold $G(a)$ at $a$ is 
\begin{equation*}
\frac{d}{d\theta}\left( \theta\cdot a\right) _{\theta=0}=\frac{d}{d\theta }%
\left( e^{\mathcal{J}\theta}a\right) _{\theta=0}=\mathcal{J}a\text{.} 
\end{equation*}
Thus, the tangent space to the $G$-orbit at ${a}$ is $T_{u}G({a%
})=\{\lambda\mathcal{J}\in\mathbb{R}^{2n}:\lambda \in\mathbb{R}\}$%
. Denote by 
\begin{equation*}
W=\{u\in\mathbb{R}^{2n}:\left\langle u,\mathcal{J}a\right\rangle =0\}
\end{equation*}
the orthogonal complement to the tangent space of the $G$-orbit of $a$ in $%
\mathbb{R}^{2n}$.

\begin{definition}
We say that $a$ is a \textbf{non-degenerate central configuration} if the
Hessian $D_{u}^{2}L(a;0)$ has only one zero eigenvalue with eigenvector $%
\mathcal{J}a$ corresponding to the generator of the $SO(2)$-rotations.
\end{definition}

For example, consider one of Lagrange's equilateral triangle solutions
following rigid circular motion. The relative equilibrium generated by the
central configuration $a$ is linearly stable if the mass parameter 
\begin{equation*}
\beta=27(m_{1}m_{2}+m_{1}m_{3}+m_{2}m_{3})/(m_{1}+m_{2}+m_{3})^{2} 
\end{equation*}
is less than 1. When $\beta=1$ there is a loss of linear stability, which
means a zero eigenvalue, that is, the relative equilibrium is degenerate.
For these mass values, we can not extend the corresponding relative equilibrium
to spaces of constant curvature (for more details, see \cite{Sic, Routh, Gas}).


We state our main theorem:

\begin{theorem}
\label{Thm3}Assume that $a$ is non-degenerate central configuration in the
plane. Then there is a positive constant $\kappa _{0}$ such that for all $%
\left\vert \kappa \right\vert <\kappa _{0}$, the equation $\nabla L(u;\kappa
)=0$ has a $G$-orbit of solutions given by 
\begin{equation*}
u(\kappa )=a+\mathcal{O}(\kappa )\in \mathbb{R}^{2n}.
\end{equation*}
\end{theorem}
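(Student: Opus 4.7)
The plan is to set up an implicit function theorem argument, but carefully, because the naive attempt to solve $\nabla L(u;\kappa)=0$ by IFT at $(a,0)$ cannot work: the $SO(2)$-invariance of $L$ forces $\mathcal{J}a$ to lie in the kernel of $D^2_u L(a;0)$. Indeed, differentiating the identity $L(e^{\mathcal{J}\theta}u;\kappa) = L(u;\kappa)$ at $\theta = 0$ gives
\begin{equation*}
\langle \nabla L(u;\kappa),\, \mathcal{J}u\rangle = 0 \qquad \text{for all } u,\kappa,
\end{equation*}
and a further differentiation confirms $D^2_u L(a;0)\,\mathcal{J}a = 0$. The non-degeneracy hypothesis is precisely that this is the only kernel direction, so $D^2_u L(a;0)$ restricts to a linear isomorphism on the slice $W = (\mathcal{J}a)^\perp$.

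Accordingly, I would work in Palais slice coordinates. Define
\begin{equation*}
F: W \times (-\kappa_0, \kappa_0) \to W, \qquad F(w,\kappa) := P_W\,\nabla L(a + w;\kappa),
\end{equation*}
where $P_W$ denotes orthogonal projection onto $W$. Joint analyticity of $F$ on a neighbourhood of $(0,0)$ follows from Proposition~\ref{Prop} together with the closed-form expression~\eqref{VS} for $V$. One has $F(0,0) = 0$ since $a$ is a central configuration, and $D_w F(0,0) = P_W\circ D^2_u L(a;0)\!\mid_W$ is invertible by non-degeneracy. The analytic implicit function theorem then supplies a unique analytic branch $w(\kappa)\in W$ with $w(0)=0$ and $F(w(\kappa),\kappa)\equiv 0$, so that $u(\kappa) := a+w(\kappa) = a + \mathcal{O}(\kappa)$.

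The final step is to upgrade the slice equation $P_W\nabla L(u(\kappa);\kappa) = 0$ to the full equation $\nabla L(u(\kappa);\kappa) = 0$. Since $\nabla L(u(\kappa);\kappa)$ lies in the one-dimensional space $W^\perp = \operatorname{span}\{\mathcal{J}a\}$, we may write it as $c(\kappa)\,\mathcal{J}a$. Substituting into the symmetry identity at $u = u(\kappa)$ gives
\begin{equation*}
0 = \langle c(\kappa)\,\mathcal{J}a,\, \mathcal{J}u(\kappa)\rangle = c(\kappa)\bigl(\|a\|^2 + \langle a,\, w(\kappa)\rangle\bigr),
\end{equation*}
and since $\|a\| \neq 0$ and $w(\kappa)$ is small, the parenthesis is nonzero; hence $c(\kappa) = 0$ and $\nabla L(u(\kappa);\kappa) = 0$. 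Acting by $G$ on $u(\kappa)$ then yields the desired $G$-orbit of solutions.

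The only genuine obstacle is the conceptual one of recognising that the continuation has to be performed transverse to the orbit; once the slice $W$ is in place, the argument reduces to a standard invocation of the analytic implicit function theorem, plus the small bookkeeping lemma that promotes the slice-gradient equation to the full one. I would also note in passing that the method treats positive and negative $\kappa$ on equal footing, since the unified Lagrangian is a single analytic family through $\kappa = 0$, as established in Section~\ref{Section 2}.
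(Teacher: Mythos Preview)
Your proof is correct and follows essentially the same Palais-slice strategy as the paper: reduce to the transverse slice $W=(\mathcal{J}a)^\perp$, invoke the implicit function theorem there using the non-degeneracy hypothesis, and let $G$-invariance dispose of the orbit direction. The only cosmetic difference is that the paper works with the scalar $\mathcal{L}_\kappa(w):=L(\upsilon(0,w);\kappa)$ on the slice (so invariance kills the $\theta$-derivative automatically and no ``upgrade'' step is needed), whereas you project the gradient onto $W$ and then explicitly verify that the residual $W^\perp$-component vanishes via the identity $\langle\nabla L(u;\kappa),\mathcal{J}u\rangle=0$; these are equivalent realizations of the same idea.
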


\begin{proof}
We prove this theorem following ideas from \cite{Fo}. By Palais' Slice
Theorem, the orbit $G(a)$ has an invariant neighborhood $\mathcal{U}$
diffeomorphic to $G\times W_{0}$, where $W_{0}$ is a $G$-invariant
neighborhood of $0\in W_{0}\subset W$. Specifically,\ there is a unique map $%
\upsilon :G\times W_{0}\rightarrow \mathcal{U}$ defined in a neighborhood $%
\mathcal{U}\subset W$ of the orbit $G(a)$ such that $\upsilon (\theta ,0)=e^{%
\mathcal{J}\theta }a$ for $(\theta ,0)\in G\times W_{0}$. The image $%
\mathcal{S}=\{\upsilon (0,w)\in \mathcal{U}:w\in W_{0}\}$ is called the
slice and $\mathcal{U}$ is called a tube of the orbit $G(a)$. The map $%
\upsilon $ provides slice coordinates $(\theta ,w)$ of $\mathcal{U}$ with $%
(\theta ,w)\in G\times W_{0}$. The Lagrangian $L$ defined in coordinates $%
(\theta ,w)$ is given by 
\begin{equation*}
\mathcal{L}_{\kappa }(\theta ,w):=L(\upsilon (\theta ,w);\kappa ).
\end{equation*}
Notice that $\mathcal{L}_{\kappa }(\theta ,w)$ is $G$-invariant under the
natural action of $G$ on $G\times W_{0}$, so $\mathcal{L}_{\kappa }(\theta
,w)$ does not depend on $\theta $, i.e. $\mathcal{L}_{\kappa
}(w):W_{0}\rightarrow \mathbb{R}.$ By hypothesis the Hessian $D_{w}^{2}%
\mathcal{L}_{0}(0)$ is invertible, so the implicit function theorem
implies that there is a $\kappa _{0}>0$ and a unique map $w:\{\left\vert
\kappa \right\vert <\kappa _{0}\}\rightarrow W_{0}$ satisfying $\nabla _{w}%
\mathcal{L}_{\kappa }(w(\kappa ))=0$ and $w(0)=0$. The fact that $\mathcal{L}%
_{\kappa }(w)$ is differentiable in $\kappa $ implies that $w(a)=\mathcal{O}%
(\kappa )$.
\end{proof}

Therefore, the equation $\nabla L(u;\kappa )=0$ defines implicitly a
relative equilibrium $u$ as a function of the curvature $\kappa $. In other
words, under the hypotheses of the theorem, one can continue a relative
equilibrium on the plane into a family of relative equilibria on spaces of
both positive and negative constant curvature.

Notice that we use the stereographic projection on the plane, so the space
is fixed and we do not need to constrain the masses to be on the curved
space (as the authors do in~\cite{DPS12}, for example). Of course, one can
always invert the projection to obtain the relative equilibria on the curved
space.

\section{Continuation to spaces of constant curvature using Lagrange
multipliers}

\label{Section 4}

In the following we give an alternative proof of Theorem \ref%
{Thm3} using Lagrange multipliers. This second proof has the advantage
that it can be easily implemented numerically.

We can obtain the numerical continuation of a given relative equilibrium $a$ to
spaces of constant curvature by considering the augmented system

\begin{equation*}
\nabla L(u)+\alpha \mathcal{J}u=0,
\end{equation*}%
where $\mathcal{J}u\,$is the generator field under rotations $SO(2)$, and $%
\alpha $ is a Lagrange multiplier or unfolding parameter. In this case we
implement the Poincar\'{e} section as the set $u\cdot e=0$ where $e$ is a
unitary vector. 

\begin{proposition}
The zeros of the augmented map
\begin{equation*}
F(u,\alpha ;\kappa )=\binom{\nabla L(u;\kappa )+\alpha \mathcal{J}u}{%
(u-a)\cdot e}:\mathbb{R}^{2n+1}\times \mathbb{R\rightarrow R}^{2n+1}
\end{equation*}%
are relative equilibria of the $n$-body problem in the space of constant
curvature $\kappa $.
\end{proposition}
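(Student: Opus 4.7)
The approach is to exploit the $SO(2)$-invariance of $L(\cdot;\kappa)$ to show that the Lagrange multiplier $\alpha$ must vanish at every zero of $F$, which reduces the augmented system to the critical point equation $\nabla L(u;\kappa)=0$ together with a transversal slice condition; the zeros are then exactly the critical points of $L$ on a chosen Poincar\'e section, and by the derivation of Section \ref{Section 3} these are precisely the relative equilibria of the curved $n$-body problem.

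First I would recall the $G$-invariance established in Section \ref{Section 3}: differentiating $L(e^{\mathcal{J}\theta}u;\kappa)=L(u;\kappa)$ in $\theta$ at $\theta=0$ yields the Noether-type identity
$$\langle \nabla L(u;\kappa),\,\mathcal{J}u\rangle =0$$
for every $u\in\Omega$. Given any zero $(u,\alpha)$ of $F(\cdot,\cdot;\kappa)$, I would take the Euclidean inner product of the first block $\nabla L(u;\kappa)+\alpha\mathcal{J}u=0$ against $\mathcal{J}u$. The first term vanishes by the symmetry identity above, leaving $\alpha\,|\mathcal{J}u|^{2}=0$. Because $\mathcal{J}$ is orthogonal we have $|\mathcal{J}u|^{2}=|u|^{2}>0$ on the collision-less open set $\Omega$ (the case $u=0$ corresponds to a total collision at the origin), and therefore $\alpha=0$.

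With $\alpha=0$ the first block becomes $\nabla L(u;\kappa)=0$. By Proposition \ref{Prop} together with the derivation preceding Theorem \ref{Thm3}, this is precisely the steady-state condition of the Lagrangian expressed in uniformly rotating coordinates, i.e.\ the defining equation of a relative equilibrium of the curved $n$-body problem at curvature $\kappa$. The remaining scalar equation $(u-a)\cdot e=0$ is a Poincar\'e section that selects one representative of the $SO(2)$-orbit of solutions, so it does not enlarge or shrink the set of relative equilibria; it merely removes the one-dimensional degeneracy caused by the rotational symmetry and makes $D_{(u,\alpha)}F$ invertible, which is what is needed to run Newton-based continuation from $\kappa=0$.

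No serious obstacle is expected in this proof, since the content is essentially the standard observation that Lagrange multipliers attached to a symmetry generator are forced to vanish by equivariance. The only mild points to confirm are that $\mathcal{J}u\neq 0$ on $\Omega$, which is automatic, and implicitly that the unit vector $e$ should be chosen with $e\cdot\mathcal{J}a\neq 0$ so that the section is genuinely transverse to the group orbit through $a$; under this mild transversality assumption, the unfolded system is well-posed and the statement of the proposition follows from the symmetry computation above.
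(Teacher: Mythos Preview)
Your proof is correct and follows essentially the same approach as the paper: both use the $SO(2)$-invariance of $L$ to obtain the orthogonality $\nabla L(u;\kappa)\cdot\mathcal{J}u=0$, then take the inner product of the first block against $\mathcal{J}u$ to force $\alpha|u|^{2}=0$ and hence $\alpha=0$, reducing to the relative-equilibrium equation $\nabla L(u;\kappa)=0$. Your write-up is somewhat more detailed (spelling out why $u\neq 0$ and commenting on the transversality hypothesis $e\cdot\mathcal{J}a\neq 0$, which the paper defers to the next proposition), but the argument is the same.
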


\begin{proof}
This follows from Proposition \ref{Prop} and the fact that the system is
invariant under rotation of $SO(2)$, i.e. $\nabla L(u)\cdot \mathcal{J}u=0$
for any $u$. Indeed, we only need to prove that a zero of $F$ has $\alpha =0$%
, which follows from
\begin{equation*}
0=\left( \nabla L(u)+\alpha \mathcal{J}u\right) \cdot \mathcal{J}u=\alpha
\left\vert u\right\vert ^{2}\text{.}
\end{equation*}
\end{proof}

\begin{proposition}
\label{alter} Let $a$ be a non-degenerate relative equilibrium and $e\cdot 
\mathcal{J}a\neq 0$, then the augmented map $F$ has a unique continuation of solutions $%
\left( u;\kappa \right) $ starting from $\left( a;0\right) $.
\end{proposition}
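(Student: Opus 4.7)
The plan is to apply the implicit function theorem directly to the augmented map $F$ at the point $(u,\alpha;\kappa)=(a,0;0)$. First I would verify that $F(a,0;0)=0$: the first block vanishes because $a$ is a central configuration (so $\nabla L(a;0)=0$) and $\alpha=0$; the second block $(a-a)\cdot e$ is zero by construction. Then I would set up the partial Jacobian with respect to the unknowns $(u,\alpha)$, which at $(a,0;0)$ takes the bordered form
\begin{equation*}
M \;=\; \begin{pmatrix} D^2_u L(a;0) & \mathcal{J}a \\ e^{\top} & 0 \end{pmatrix},
\end{equation*}
since $\partial_{u}(\nabla L + \alpha \mathcal{J}u)=D^2_u L(u;\kappa)+\alpha\mathcal{J}$ evaluated at $\alpha=0$ gives the Hessian, $\partial_\alpha(\nabla L + \alpha\mathcal{J}u)=\mathcal{J}u=\mathcal{J}a$, and the Poincar\'e section contributes the row $(e^{\top},0)$.

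The main step, and the only delicate one, is showing $M$ is invertible. Suppose $M\binom{v}{\beta}=0$, i.e. $D^2_u L(a;0)v + \beta\,\mathcal{J}a = 0$ and $e\cdot v = 0$. Pairing the first equation with $\mathcal{J}a$ and using symmetry of the Hessian together with $D^2_u L(a;0)\mathcal{J}a = 0$ (the non-degeneracy hypothesis: $\mathcal{J}a$ generates the one-dimensional kernel), I obtain $\beta\,|\mathcal{J}a|^{2}=0$, hence $\beta=0$ (since $a\neq 0$). Then $D^2_u L(a;0)v=0$, so by non-degeneracy $v=\lambda\,\mathcal{J}a$ for some $\lambda\in\mathbb{R}$. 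Substituting into the second equation gives $\lambda\,(e\cdot\mathcal{J}a)=0$, and the transversality hypothesis $e\cdot\mathcal{J}a\neq 0$ forces $\lambda=0$. Thus $M$ has trivial kernel and is invertible.

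Having established invertibility, I would invoke the implicit function theorem to obtain a positive $\kappa_0$ and a unique $C^1$ map $\kappa\mapsto (u(\kappa),\alpha(\kappa))$ defined for $|\kappa|<\kappa_0$ with $(u(0),\alpha(0))=(a,0)$ and $F(u(\kappa),\alpha(\kappa);\kappa)=0$. Finally, applying the previous proposition, any zero of $F$ automatically satisfies $\alpha(\kappa)=0$, so $u(\kappa)$ is genuinely a relative equilibrium of the curved problem, and the section $(u-a)\cdot e=0$ makes this continuation unique (breaking the $SO(2)$ symmetry thanks again to $e\cdot\mathcal{J}a\neq 0$).
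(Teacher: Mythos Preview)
Your proof is correct and follows essentially the same route as the paper: verify $F(a,0;0)=0$, compute the bordered Jacobian, show it has trivial kernel using self-adjointness of the Hessian and the one-dimensionality of $\ker D_u^2L(a;0)$, then invoke the implicit function theorem. Your pairing argument with $\mathcal{J}a$ to kill $\beta$ is just a slightly more explicit version of the paper's observation that the range and kernel of a self-adjoint operator are orthogonal.
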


\begin{proof}
We prove this following ideas in \cite{Do}. Since $a$ is a central
configuration, it satisfies $\nabla _{u}L(a;0)=0$, i.e. $F(a,0;0)=0$.
By the implicit function theorem it is enough to prove that 
\begin{equation*}
D_{(u,\alpha )}F(a,0;0)=\left( 
\begin{array}{cc}
D_{u}^{2}L(a;0)+\alpha \mathcal{J}u & \mathcal{J}u \\ 
e^{T} & 0%
\end{array}%
\right) :\mathbb{R}^{2n+1}\times \mathbb{R\rightarrow R}^{2n+1}
\end{equation*}%
is invertible at a non-degenerate relative equilibrium $a$ with $\alpha =0$
(see the previous proposition). Consequently, it is enough to prove that $%
D_{(u,\alpha )}F(a,0;0)(u,\alpha )=0$ implies that $(u,\alpha )=0$. Assume
that $u\neq 0$. Then the equation $D_{u}^{2}L(a)u+\alpha \mathcal{J}a=0$
implies that $D_{u}^{2}L(a)u\in \ker D_{u}^{2}L(a)$. But $\ker D_{u}^{2}L(a)$
is orthogonal to the range of $D_{u}^{2}L(a)$, because $D_{u}^{2}L(a)$ is
self adjoint, so $\alpha =0$ and $u\in \ker D_{u}^{2}L(a)$. But the
equation $e^{T}u=0$ implies that $u=0$ , since $e\cdot \mathcal{J}a\neq 0$.
\end{proof}

\section{Numerical continuation of relative equilibria}

\label{Section 5}The system for $u_{j}=x_{j}+iy_{j}$ in real coordinates $%
u_{j}=(x_{j},y_{j})\in \mathbb{R}^{2}$ is the following: the Lagrangian $%
L(u;\kappa )=T(u)+U(u)$ has kinetic and potential energies $T$ and $U$ with
formulas (\ref{T-U}), where the expressions for $\lambda $ and $V$ in real
components are presented in (\ref{RT}) and (\ref{Rl}). Thus in this section
we set the numerical continuation of non-degenerate relative equilibria $a\in \mathbb{R}^{2n}$
satisfying (\ref{cc}) as zeros of the augmented map
\begin{equation}
F(u,\alpha ;\kappa )=\binom{\nabla _{u}L(u,\bar{u})+\alpha \mathcal{J}u}{%
u\cdot \mathcal{J}a}:\mathbb{R}^{2n}\times \mathbb{R}^{2}\mathbb{\rightarrow
R}^{2n}\times \mathbb{R}\text{,}  \label{eq:RE_condition}
\end{equation}%
where $F(a,0;0)=0$,  $\mathcal{J}=J\oplus ...\oplus J$, and $J$ is the
symplectic matrix~\eqref{eq:sympl_matrix}.

\subsection{Gradient of the Lagrangian}\label{sec:Gradient_of_the_Lagrangian}

In this section we find an expression for the gradient $\nabla_{u} L(u)$ in real
coordinates $(x_{k}, y_{k})\in\mathbb{R}^{2}$.

The real derivatives of the kinetic energy $T$ are
\begin{align}
\pd{T}{x_k}  &= m_{k} \left(  \lambda(u_{k})\frac{-2
x_{k}}{R^{2}+\abs{u_k}^{2}} \abs{u_k}^{2} + \lambda(u_{k}) x_{k} \right) ,\\
\pd{T}{y_k}  &= m_{k} \left(  \lambda(u_{k})\frac{-2
y_{k}}{R^{2}+\abs{u_k}^{2}} \abs{u_k}^{2} + \lambda(u_{k}) y_{k} \right) .
\end{align}

The real derivatives of the potential energy $U$ are
\begin{align}
\pd{U}{x_k}  &= \sum_{j=1, j\neq k}^{n} m_{k}m_{j} \pd{V}{x_k},\\
\pd{U}{y_k}  &= \sum_{j=1, j\neq k}^{n} m_{k}m_{j} \pd{V}{y_k},
\end{align}
where
\begin{multline*}
\pd{V}{x_k}  =
\frac{1}{2}\frac{
\left( 4 x_j \kappa + 2 x_k\kappa \left( \abs{u_j}^2\kappa -1 \right) \right) 
\left( A(u_j, u_k)^{1/2} B(u_j, u_k)^{1/2} \right)
}{A(u_j,u_k)B(u_j,u_k)} \\
-\frac{1}{2}\frac{
\left( 4\left( u_{j}\cdot u_{k}\right) \kappa+\left(
\left\vert u_{k}\right\vert ^{2}\kappa-1\right) \left( \left\vert
u_{j}\right\vert ^{2}\kappa-1\right) \right) 
}{A(u_j,u_k)B(u_j,u_k)} \\
\cdot \left( -(x_j-x_k)A(u_j, u_k)^{-1/2} B(u_j, u_k)^{1/2} +  
\left( x_k\abs{u_j}^2\kappa^2 + x_j\kappa \right) A(u_j, u_k)^{1/2} B(u_j, u_k)^{-1/2} \right)
\end{multline*}
and
\begin{align*}
A(u_j,u_k) &= \abs{u_j-u_k}^2, \\
B(u_j,u_k) &= \abs{u_k}^2 \abs{u_j}^2 \kappa^2 + 2(u_j \cdot u_k) \kappa + 1.
\end{align*}

\subsection{Relative Equilibria on the Plane}

As stated in Definition \ref{def}, a central configuration verifies equations
\eqref{cc}. For equal masses, the polygonal configuration is central
\cite{GaIz11}:

\begin{proposition} \label{prop:polygonal_RE}
Let%
\begin{equation*}
s_{1}=\sum_{j=1}^{n-1}\frac{1-e^{ij\zeta }}{\left\Vert 1-e^{ij\zeta
}\right\Vert ^{3}}=\frac{1}{2^{2}}\sum_{j=1}^{n-1}\frac{1}{\sin (j\pi /n)}.
\end{equation*}%
In the case of $n$ equal masses of value $1$, we have the polygonal central configuration $%
a_{j}=re^{i\zeta }\in \mathbb{C}$ satisfying (\ref{cc}) with 
\begin{equation*}
r=\frac{1}{2}\left( s_{1}\right) ^{1/3}.
\end{equation*}
\end{proposition}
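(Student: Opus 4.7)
The plan is to exploit the full $\mathbb{Z}_n$ rotational symmetry of the polygon to reduce the central configuration equations~(\ref{cc}) for the vector $(a_1,\dots,a_n)$ to a single scalar equation for the radius $r$, and then to verify both equivalent expressions for the constant $s_1$.

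First I would substitute the ansatz $a_j = r e^{i j\zeta}$ (with $\zeta = 2\pi/n$) into~(\ref{cc}) with $m_k = 1$ for all $k$. Using $a_j - a_k = r e^{i j\zeta}(1 - e^{i(k-j)\zeta})$ and $\|a_j-a_k\| = r\,|1-e^{i(k-j)\zeta}|$, the right-hand side factors as
\begin{equation*}
\frac{1}{2}\sum_{k\ne j}\frac{a_j-a_k}{\|a_j-a_k\|^3}
= \frac{e^{i j\zeta}}{2 r^{2}}\sum_{k\ne j}\frac{1-e^{i(k-j)\zeta}}{|1-e^{i(k-j)\zeta}|^{3}}.
\end{equation*}
Because the $n$-th roots of unity are cyclically permuted by the map $k\mapsto k-j \pmod n$, the inner sum over $k\ne j$ is independent of $j$ and equals $s_1$. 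The identity $4 a_j = (s_1/(2r^{2}))\,e^{ij\zeta}$ therefore becomes $4 r = s_1/(2 r^{2})$, which gives $r^{3} = s_1/8$, i.e. $r = \tfrac12\, s_1^{1/3}$. This is the central configuration identity claimed.

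Next I would verify the second equality $s_1 = \tfrac14\sum_{j=1}^{n-1}\csc(j\pi/n)$. The starting identity is $1 - e^{i j\zeta} = -2i\,e^{i j\pi/n}\sin(j\pi/n)$, from which $|1-e^{i j\zeta}| = 2\sin(j\pi/n)$ for $j=1,\dots,n-1$. This gives
\begin{equation*}
\frac{1-e^{ij\zeta}}{|1-e^{ij\zeta}|^{3}} \;=\; \frac{-i\, e^{i j\pi/n}}{4\sin^{2}(j\pi/n)}.
\end{equation*}
Pairing the index $j$ with $n-j$ (noting that $e^{i(n-j)\pi/n} = -e^{-ij\pi/n}$ and $\sin((n-j)\pi/n) = \sin(j\pi/n)$), the two contributions combine to
\begin{equation*}
\frac{-i\,e^{ij\pi/n} + i\,e^{-ij\pi/n}}{4\sin^{2}(j\pi/n)}
\;=\; \frac{2\sin(j\pi/n)}{4\sin^{2}(j\pi/n)}
\;=\; \frac{1}{2\sin(j\pi/n)},
\end{equation*}
which is a real scalar. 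Summing over the pairs reproduces $\tfrac14\sum_{j=1}^{n-1}\csc(j\pi/n)$. A brief separate check disposes of the self-paired middle index $j = n/2$ when $n$ is even: its direct contribution is $1/4$, matching $\tfrac14\cdot\csc(\pi/2)$.

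I do not anticipate a genuine obstacle; the proof is essentially a symmetry reduction followed by a trigonometric identity. The one point that requires minor care is keeping track of the parity of $n$ in the pairing argument for the second equality, but the even case handles itself via the fixed point $j=n/2$ as shown above.
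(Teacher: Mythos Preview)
Your proof is correct and follows essentially the same route as the paper for the main claim: factor $a_j$ out of the sum in~(\ref{cc}), recognize the remaining scalar as $s_1/r^3$, and solve $4 = s_1/(2r^3)$. The paper's proof is considerably terser and does not verify the trigonometric identity $s_1 = \tfrac14\sum_{j=1}^{n-1}\csc(j\pi/n)$ at all (it is simply stated in the proposition, presumably drawn from the cited reference), whereas you supply a complete pairing argument for it; this is extra but certainly not a divergence in approach.
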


\begin{proof}
We have%
\begin{align*}
4a_{j}-\frac{1}{2}\sum_{k\neq j}\frac{a_{j}-a_{k}}{\left\Vert
a_{j}-a_{k}\right\Vert ^{3}} & =4a_{j}-\frac{1}{2}\sum_{k\neq j}\frac{%
a_{j}-a_{k}}{\left\Vert a_{j}-a_{k}\right\Vert ^{3}} \\
& =a_{j}\left( 4-\frac{1}{2}\frac{1}{r^{3}}\sum_{j=1}^{n-1}\frac {%
1-e^{ij\zeta}}{\left\Vert 1-e^{ij\zeta}\right\Vert ^{3}}\right) =a_{j}\left(
4-\frac{1}{2}\frac{1}{r^{3}}s_{1}\right) .
\end{align*}
\end{proof}

For $n=3$ equal masses of value $1$ we have 
\begin{equation*}
s_{1}=\frac{1}{4}\sum_{j=1}^{2}\frac{1}{\sin (j\pi /3)}=\allowbreak 3^{-1/2}
\end{equation*}%
and 
\begin{equation}
r=\frac{1}{2}\left( 3^{-1/2}\right) ^{1/3} \approx 0.416\,34\text{.}
\end{equation}
\subsection{Numerical Results}

\subsubsection{Equal Masses}

Perhaps the most famous relative equilibrium is Lagrange's equilateral
triangle, which was derived in Proposition~\ref{prop:polygonal_RE}.
Assuming three equal masses $m_1=m_2=m_2=1$, Lagrange's equilateral relative
equilibrium $(z_1, z_2, z_3)\in\C^3$ is
\[ a = (z_1, z_2, z_2) = (r e^{i 0}, r e^{i 2\pi/3}, r e^{i 4\pi/3}), \]
where 
$r=\frac{1}{2} 3^{-1/6}$. 

Using numerical methods one can continue Lagrange's equilateral triangle $a$
into a family of relative equilibria for positive curvatures.
We discretize the values of curvature as $\{\kappa_i = 0.01 i\}$ for
$i=0,1,2,\dotsc$.
For each curvature value $\kappa_i$, we apply an iterative multidimensional
root finding algorithm to solve equation~\eqref{eq:RE_condition}, using as seed
the previous relative equilibrium obtained (that of $\kappa_{i-1}$).
Detailed expressions for equation~\eqref{eq:RE_condition} were provided in
Section~\ref{sec:Gradient_of_the_Lagrangian}.
The root finding algorithm used is \texttt{gsl\_multiroot\_fdfsolver\_hybridsj}
from the GNU Standard Library.

Of course, we can also continue Lagrange's equilateral triangle $a$ for
negative curvatures in the exact same way.

\begin{figure}[htp]
	\centering
	\begin{subfigure}[b]{0.75\textwidth}
		\includegraphics[width=\textwidth]{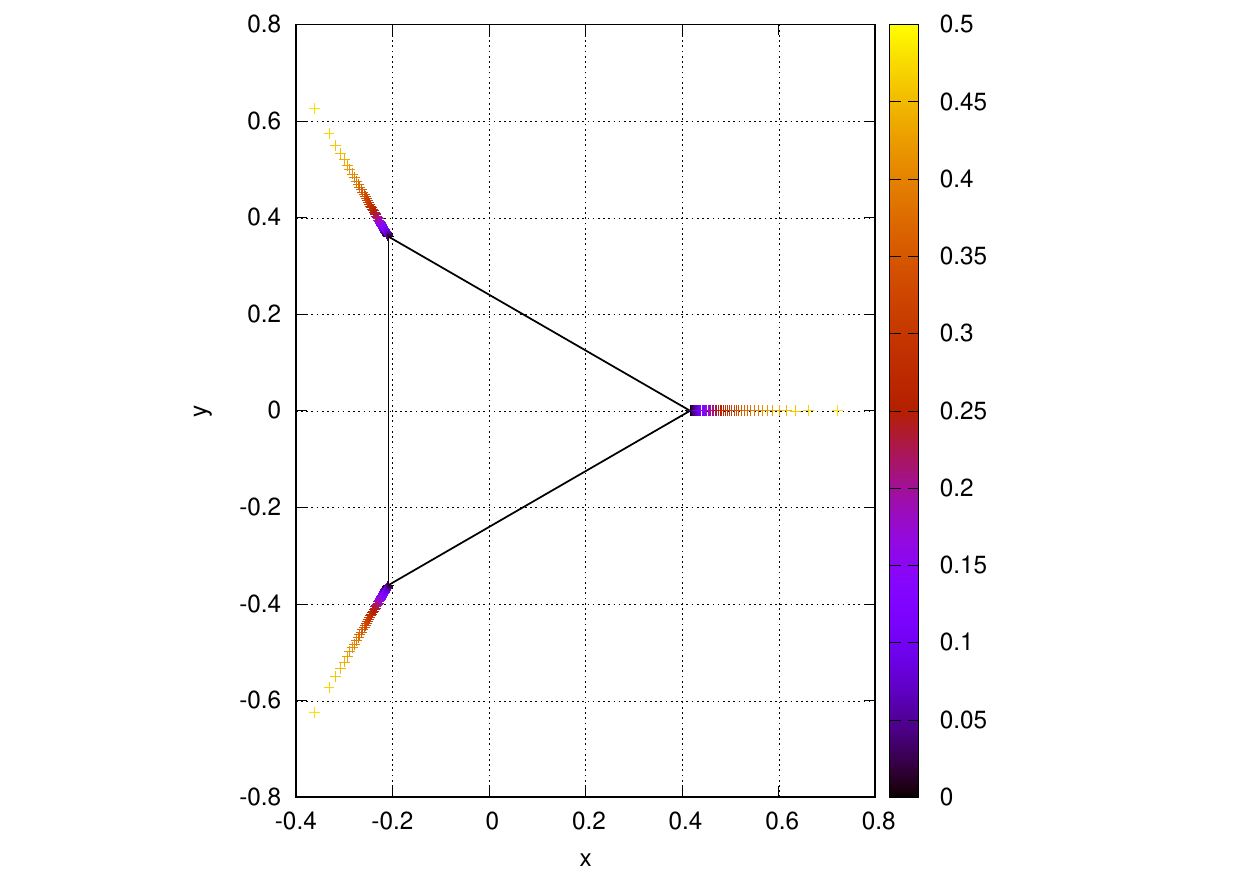}
		\caption{Positive curvatures.}
		\label{fig:RE_1_1_1_pos}
	\end{subfigure}
 
	\begin{subfigure}[b]{0.75\textwidth}
		\includegraphics[width=\textwidth]{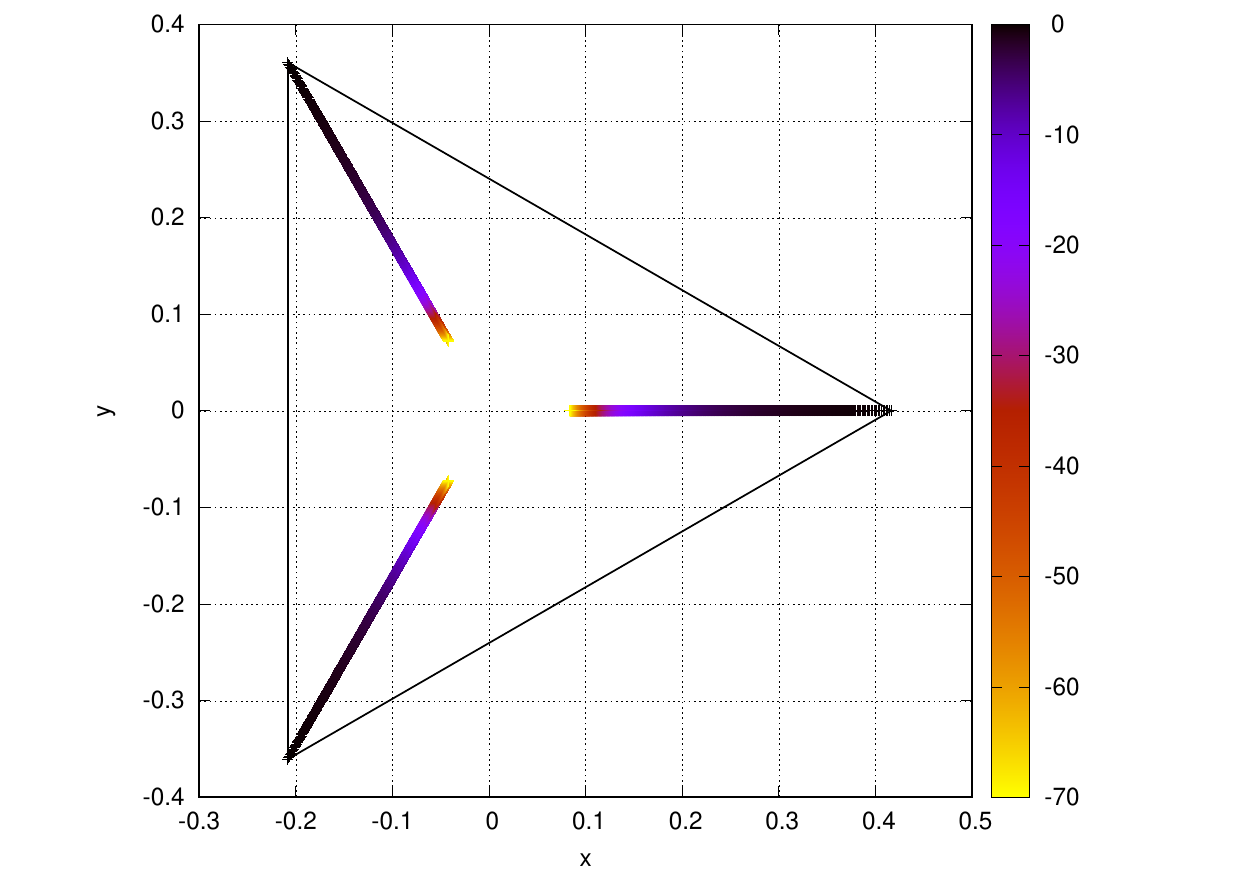}
		\caption{Negative curvatures.}
		\label{fig:RE_1_1_1_neg}
	\end{subfigure}

	\caption{Continuation of Lagrange's equilateral relative equilibrium $a$
	with equal masses $m_1 = m_2 = m_3 = 1$.
	Lagrange's equilateral relative equilibrium $a$ is marked with a black
	triangle.
	Relative equilibria continuating for curved spaces are color-coded
	according to their curvature value $\kappa$. 
	}
	\label{fig:RE_1_1_1}
\end{figure}

Using this methodology, we can numerically continue this relative
equilibrium to spaces of constant curvature $\kappa\in[-70, 0.48]$.
The family of relative equilibria obtained is shown in
Figure~\ref{fig:RE_1_1_1}. All configurations shown verify the relative
equilibrium condition~\eqref{eq:RE_condition} with precision $10^{-13}$.

When continuing to positive curvature, the family cannot be continued beyond
$\kappa\approx 0.48$, possibly due to a bifurcation: this is an interesting
question that we will try to answer in a forthcoming paper. When continuing to
negative curvature, on the other hand, the family can be continued up to
$\kappa\approx -70$ and beyond. 
We conjecture that the family actually extends to $\kappa \to -\infty$.
However, we loose numerical accuracy as the relative equilibrium approaches the
origin. In practice, we decided to stop our continuation when the numerical
accuracy falls below $10^{-13}$.

\subsubsection{Different Masses}

Consider now Lagrange's equilateral triangle on the plane, but assume three
different masses, for example $m_1=1$, $m_2=2$, $m_3=3$. It is not hard to find
this relative equilibrium analytically, given the following constrains: 
\begin{enumerate}
	\item The triangle is equilateral. 
	\item The length of each side must fulfill equation~\eqref{cc}.
	\item The center of mass is at the origin.
\end{enumerate}

\begin{figure}[htp]
	\centering
	\begin{subfigure}[b]{0.75\textwidth}
		\includegraphics[width=\textwidth]{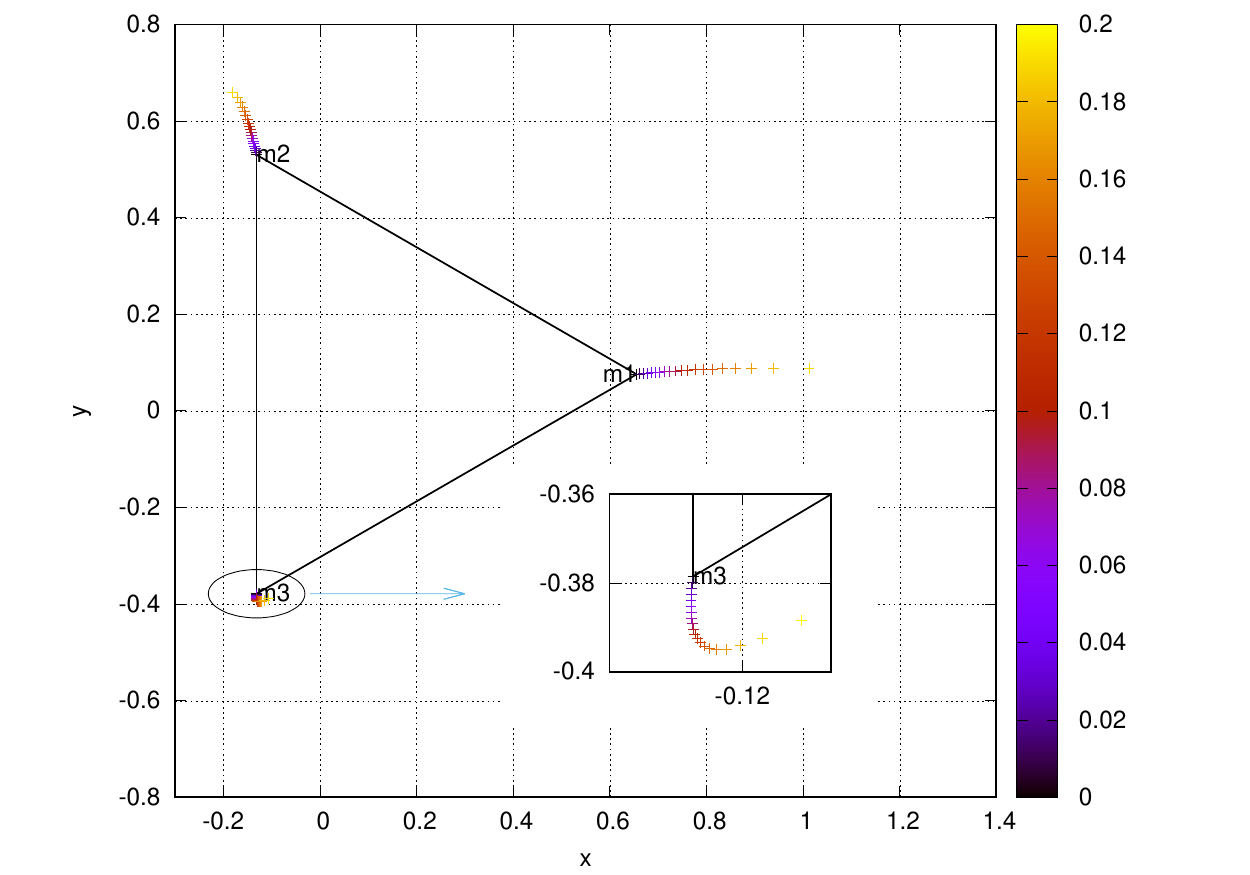}
		\caption{Positive curvatures. Inset: magnification of the region about
		$m_3$.}
		\label{fig:RE_1_2_3_pos}
	\end{subfigure}
 
	\begin{subfigure}[b]{0.75\textwidth}
		\includegraphics[width=\textwidth]{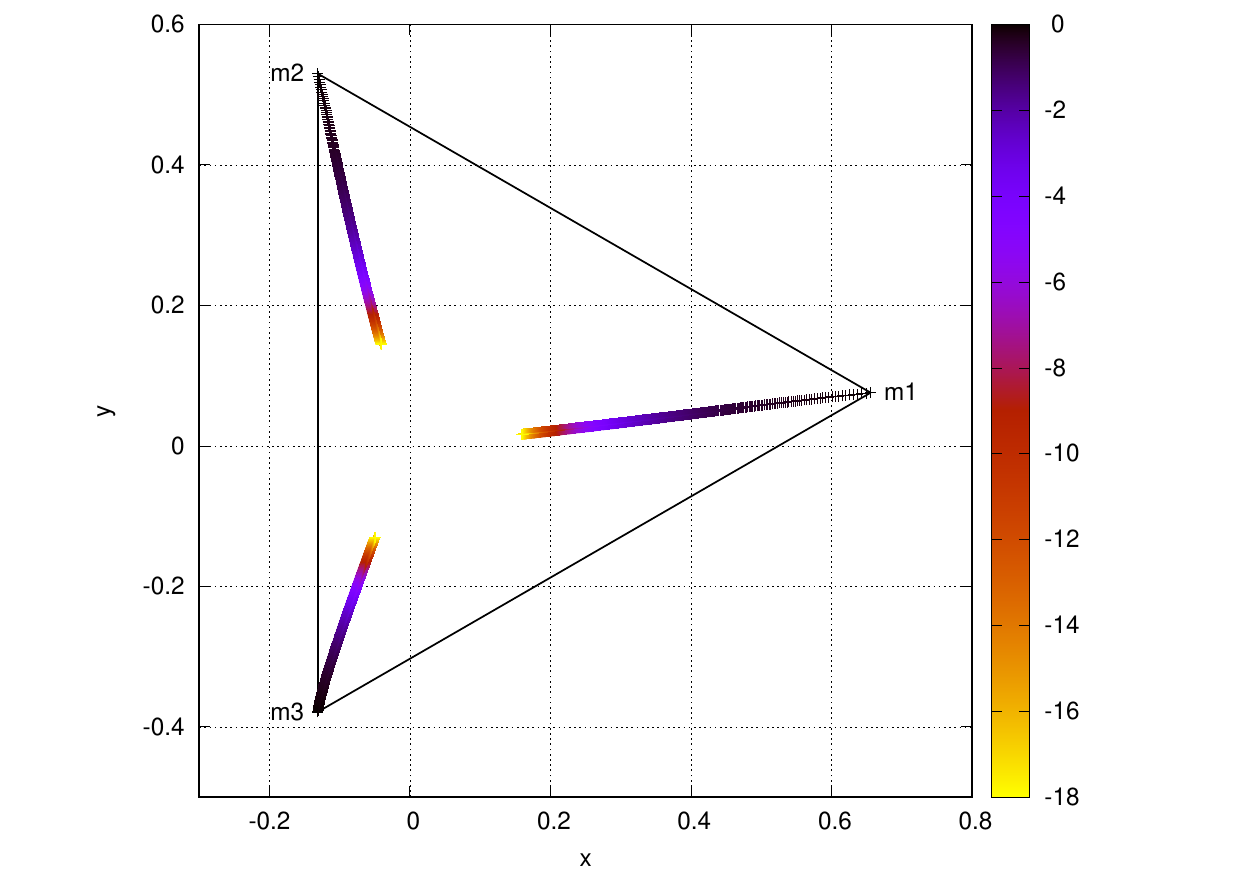}
		\caption{Negative curvatures.}
		\label{fig:RE_1_2_3_neg}
	\end{subfigure}

	\caption{Continuation of Lagrange's equilateral relative equilibrium $a$
	with different masses $m_1=1$, $m_2=2$, $m_3=3$.
	Lagrange's equilateral relative equilibrium $a$ is marked
	with a black triangle.
	Relative equilibria continuating for curved spaces are color-coded
	according to their curvature value $\kappa$. 
	}
	\label{fig:RE_1_2_3}
\end{figure}

Using the same methodology, we can numerically continue this relative
equilibrium to spaces of constant curvature $\kappa\in [-18, 0.19]$. 
The family of relative equilibria obtained is shown in
Figure~\ref{fig:RE_1_2_3}. All configurations shown verify the relative
equilibrium condition~\eqref{eq:RE_condition} with precision $10^{-13}$.

When continuing to positive curvature, the family cannot be continued beyond
$\kappa\approx 0.19$, possibly due to a bifurcation. When continuing to
negative curvature, on the other hand, the family can be continued up to
$\kappa\approx -18$ and beyond. 
We conjecture that the family actually extends to $\kappa \to -\infty$.
However, we loose numerical accuracy as the relative equilibrium approaches the
origin. In practice, we decided to stop our continuation when the numerical
accuracy falls below $10^{-13}$.

\subsubsection{Configurations in the sphere and hyperboloid}

The inverse transformation of the stereographic projection \cite{Pe,Pe2} defines the cartesian 
coordinates $(x,y,z)$ in the inertial frame of reference. The equation that must 
be fulfilled, due to the restriction that the particles move on the sphere or
hyperboloid, is $x^2+y^2+\sigma z^2 = \sigma R^2$, $\kappa = \sigma/R^2$, where $\sigma=1$ 
for the sphere, and $\sigma = -1$ for the hyperboloid.

In order to show all the configurations with the same curvature sign on a
common surface, we rescale the configurations. Configurations with curvature
$\kappa>0$ are rescaled in such a way that they lie on the surface of the
unitary sphere $x^2+y^2+z^2 = 1$. Configurations with
$\kappa<0$ are rescaled so that $x^2+y^2-z^2 = -1$. 

For positive curvature, the family that emerges from Lagrange's equilateral
triangle central configuration is shown in Figure \ref{fig:fampos}. 
For comparison we plot the case of equal masses $m_1=m_2=m_3=1$ and
different masses $m_1=1$, $m_2=2$, $m_3=3$ in the same figure. In a similar
way, the central configurations for negative curvature are shown in Figure
\ref{fig:famneg}.  

Under the rescaling, the families of central configurations share, in some
sense, features with the the classical case: for a given curvature $\kappa$, if
one body is more massive than other one then the body with more mass will be
closer to the axis $z$ (in the classical case $z=0$, and the point of reference
is the center of mass of the system), therefore also closer to the pole. This
property can be appreciated in Figures \ref{fig:orbpos}, \ref{fig:orbneg} for
positive and negative curvatures, respectively. We remark that this is the
opposite that happens without the rescaling. For instance, in the case of
positive curvature we have the relation $\kappa =1/R^2$, therefore $R \to 0$ as
$k \to \infty$.

As proved in~\cite{DPS12}, Lagrange's equilateral solutions exist on spaces of
constant curvature if and only if the three masses are equal and lie in a
circle parallel to the equator. We remark that, until now, there were
no results about relative equilibria where the three bodies have different masses
and move on circles with different heights $z$ \cite{Dia2011}.

\begin{figure}
	\centering
	\includegraphics[width=0.6\textwidth]{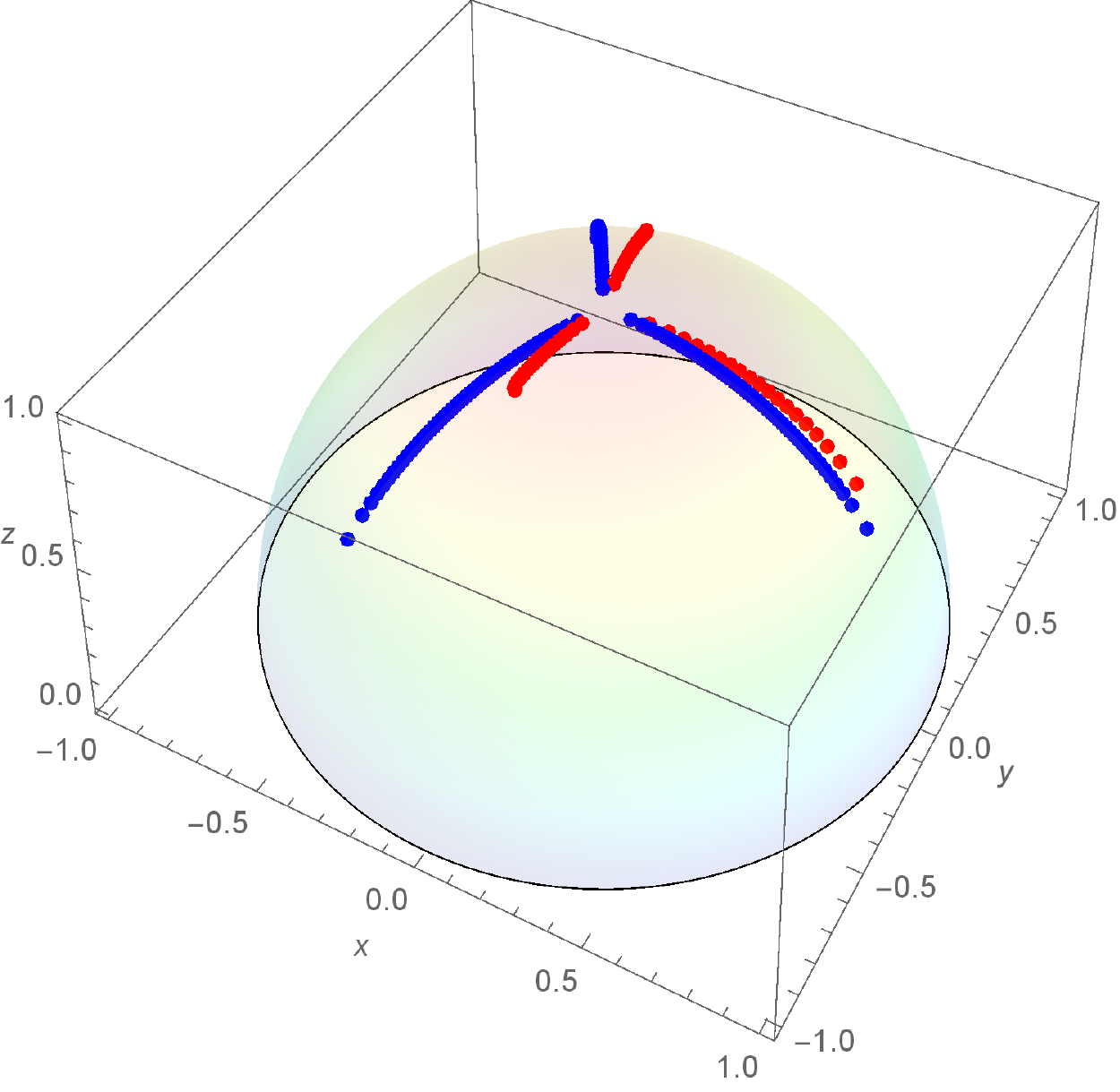}
	\caption{Continuation of Lagrange's relative equilibrium $a$ in cartesian 
	coordinates $(x,y,z)$ with 
	positive curvature $\kappa = 1/R^2$. Blue series for $m_1=1$, $m_2=1$, $m_3=1$, 
	and red series for $m_1=1$, $m_2=2$, $m_3=3$. The configuration is rescaled in
	such a way that $x_j^2+y_j^2+z_j^2 = 1$ for $j=1,2,3$, and the reflection
	$z \to -z$ is used.}
	\label{fig:fampos}
\end{figure}

\begin{figure}
	\centering
	\includegraphics[width=0.6\textwidth]{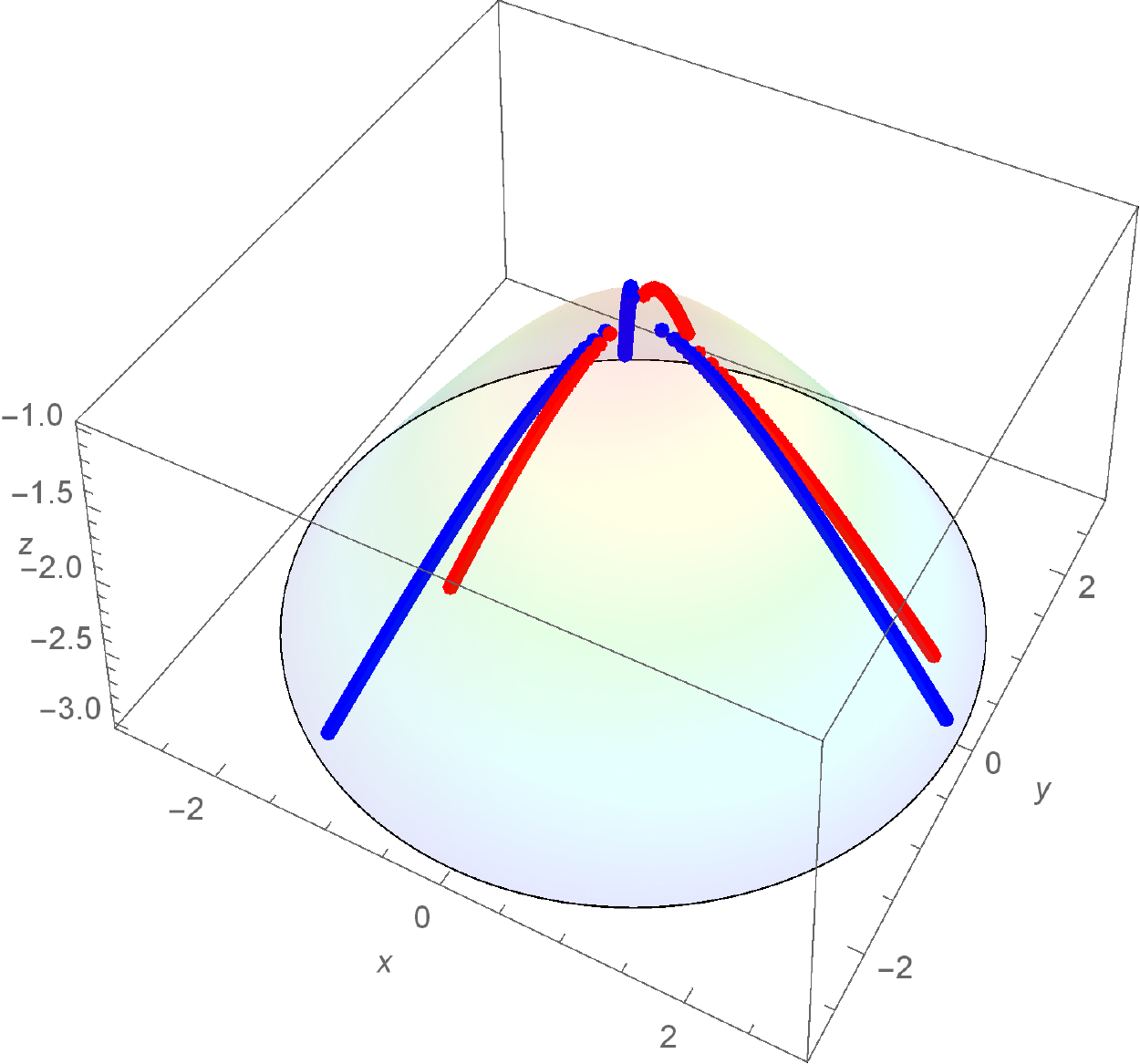}
	\caption{Continuation of Lagrange's relative equilibrium $a$ in cartesian 
	coordinates $(x,y,z)$ with 
	negative curvature $\kappa = -1/R^2$. Blue series for $m_1=1$, $m_2=1$, $m_3=1$, 
	and red series for $m_1=1$, $m_2=2$, $m_3=3$. The configuration is rescaled
	in such a way that $x_j^2+y_j^2-z_j^2 = -1$ for $j=1,2,3$, and the
	reflection $z \to -z$ is used.}
	\label{fig:famneg}
\end{figure}

\begin{figure}
	\centering
	\includegraphics[width=0.48\textwidth]{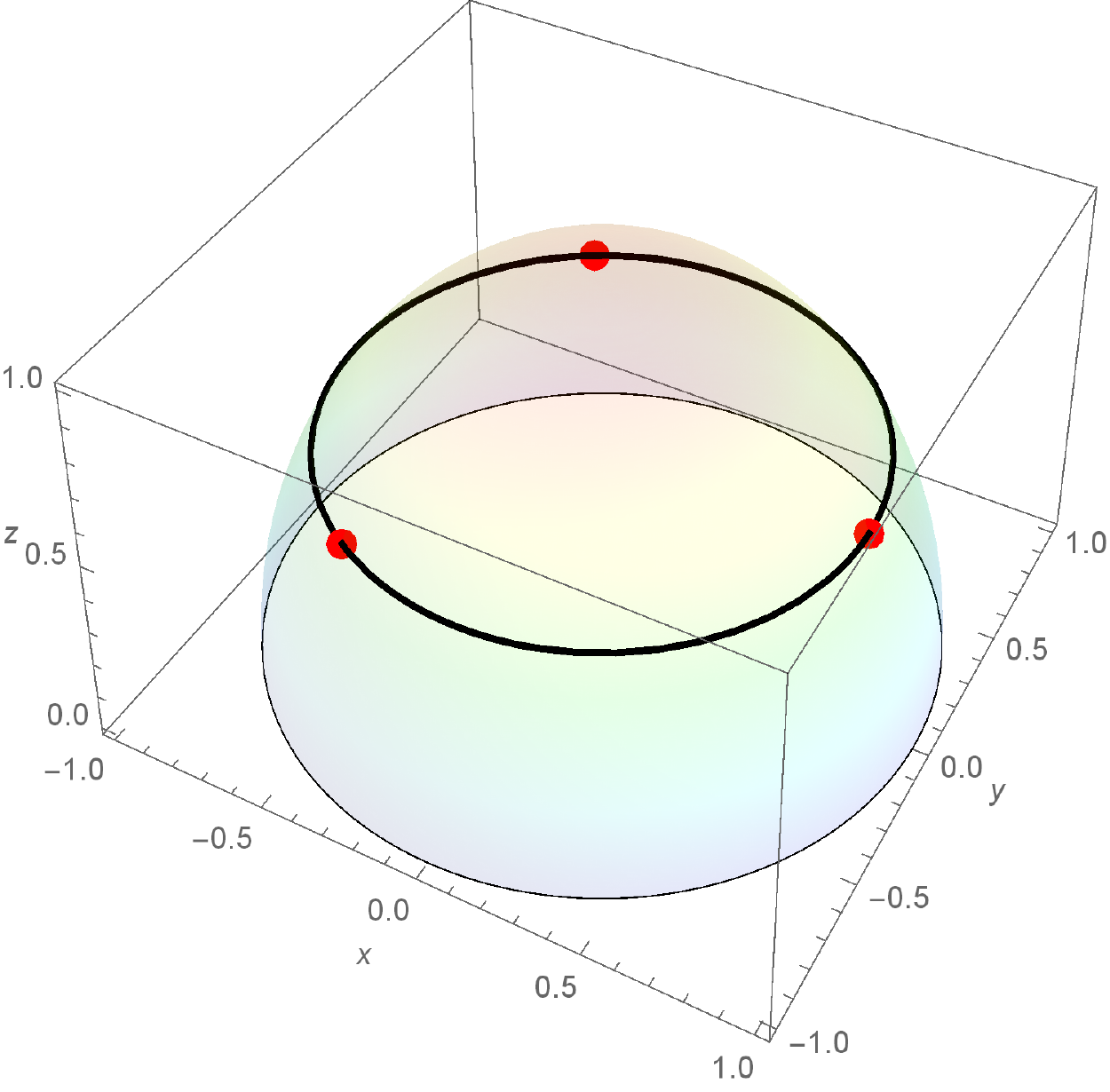}
	\includegraphics[width=0.48\textwidth]{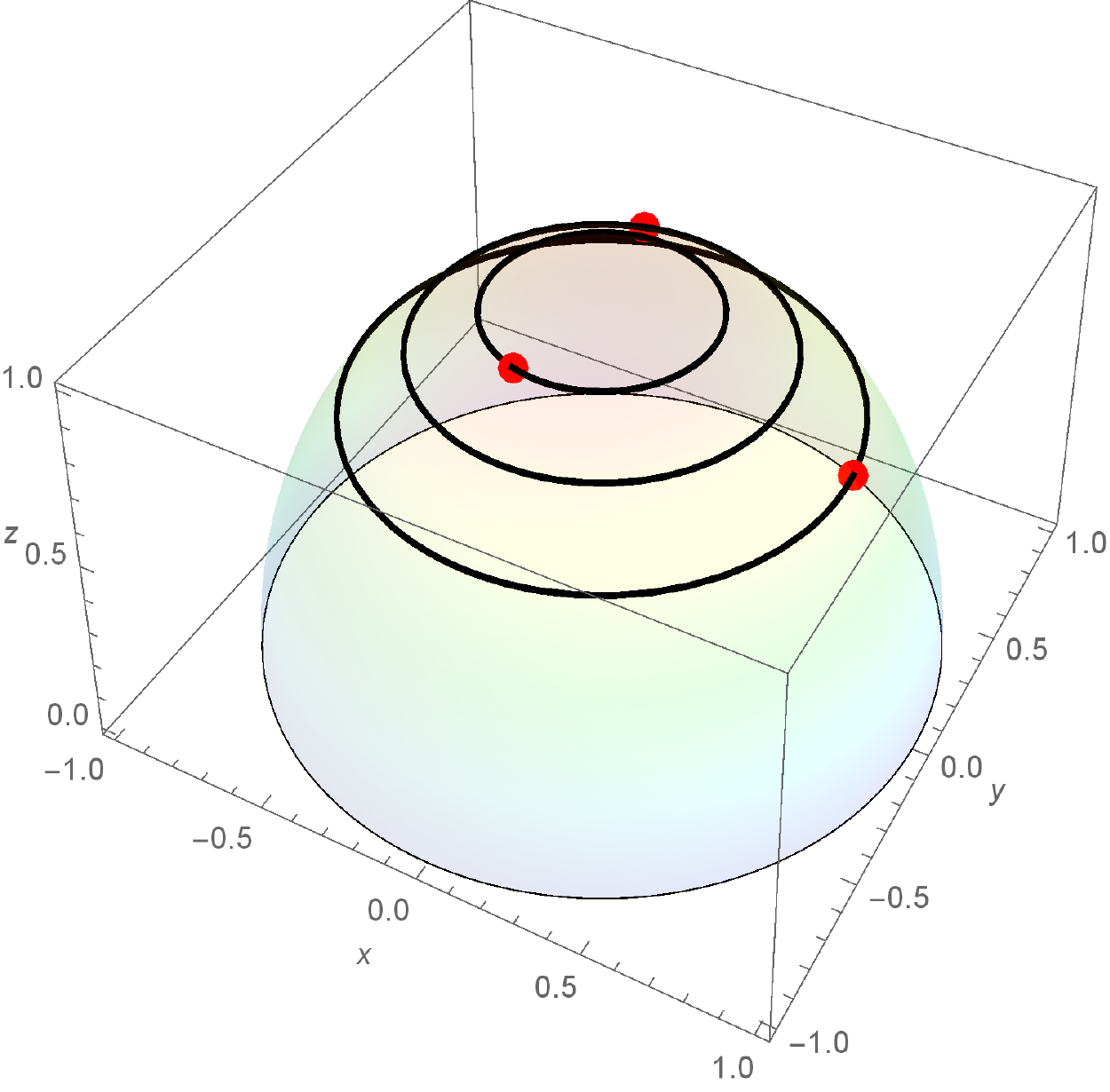}
\caption{Periodic circular orbits on the sphere. On the left the case $m_1=1$,
	$m_2=1$, $m_3=1$, with $\kappa = 0.49$, and on the right $m_1=1$, $m_2=2$,
	$m_3=3$, with $\kappa = 0.19$. Same rescaling and reflection as in Figure
	\ref{fig:fampos}.}
	\label{fig:orbpos}
\end{figure}

\begin{figure}
	\centering
	\includegraphics[width=0.48\textwidth]{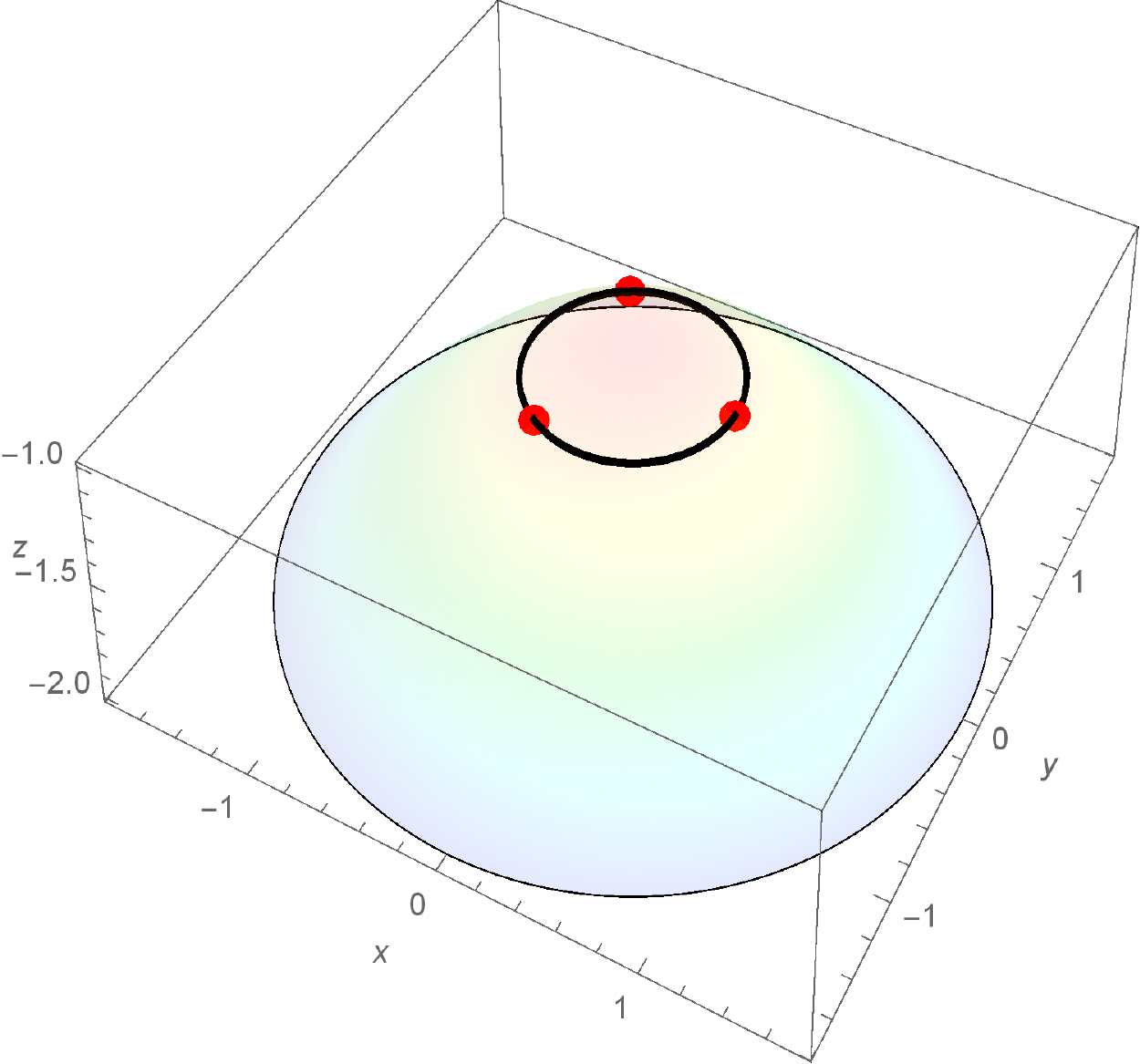}
	\includegraphics[width=0.48\textwidth]{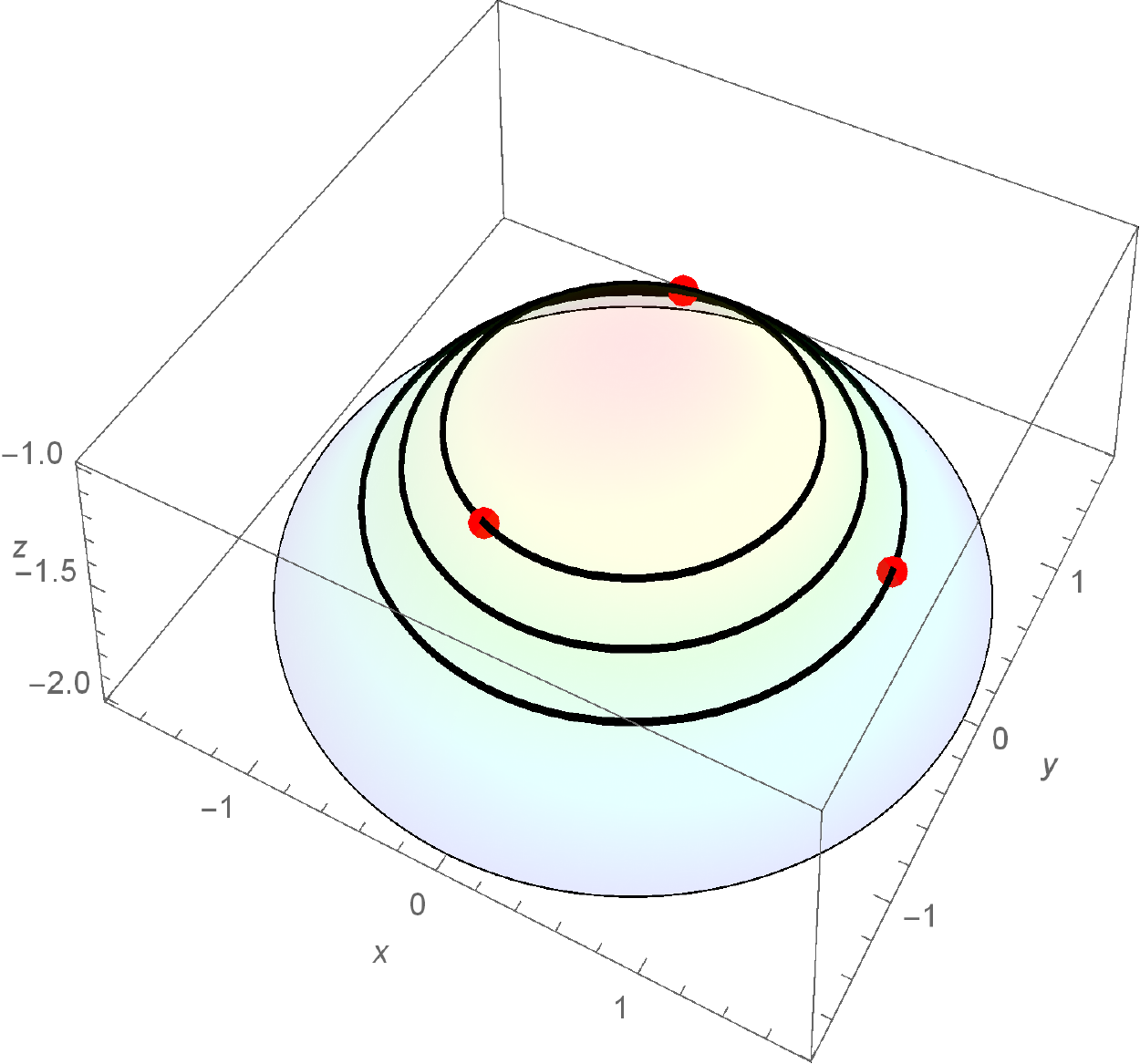}
\caption{Periodic circular orbits on the hyperboloid. On the left the case
	$m_1=1$, $m_2=1$, $m_3=1$, with $\kappa = -0.49$, and on the right $m_1=1$,
	$m_2=2$, $m_3=3$, with $\kappa = -1.99$. Same rescaling and reflection as
	in Figure \ref{fig:famneg}.}
	\label{fig:orbneg}
\end{figure}

\newpage

\subsection*{Acknowledgements}

The first and third authors have been partially supported by
\textit{Asociaci\'on Mexicana de Cultura A.C.} and by CONACYT, M\'exico,
project A1S10112. Research of P.R. was partially supported by NSF grant
DMS-1814543. C.G.A. was partially supported by UNAM-PAPIIT project IN115019.

\bibliography{ContinuationRE}

\begin{thebibliography}{10}

\bibitem{Sic}
Sicardy B.
\newblock Stability of the triangular lagrange points beyond gascheau’s
  value.
\newblock {\em Celest Mech Dyn Astr}, 107(3):145–155, 2010.

\bibitem{Bor2}
Bizyaev Ivan~A. Borisov Alexey~V., Mamaev Ivan~S.
\newblock The spatial problem of two bodies on the sphere. reduction and
  stochasticity.
\newblock {\em Reg. Chaotic Dyn.}, 21(5), 2016.

\bibitem{Garcia}
Mamaev I.S. Montaldi~J. Borisov~A.V., Garcia-Naranjo~L.C.
\newblock Reduction and relative equilibria for the $2$-body problem in spaces
  of constant curvature.
\newblock {\em Cel. Mech. Dyn. Astr.}, 130:article 43, 2018.

\bibitem{Simo1}
Mart\'inez R.~Sim\'o C.
\newblock On the stability of the lagrangian homographic solutions in a curved
  three-body problem.
\newblock {\em Disc. Cont. Dyn. Sys.}, 33(3):1157--1175, 2013.

\bibitem{Simo2}
Mart\'inez R.~Sim\'o C.
\newblock Relative equilibria of the restricted three body problem in curved
  spaces.
\newblock {\em Cel. Mech. Dyn. Astr.}, 128:221--259, 2017.

\bibitem{Dia2011}
Florin Diacu.
\newblock Polygonal homographic orbits of the curved $n$--body problem.
\newblock {\em T. Am. Math. Soc.}, 364(5):2783--2802, 2011.

\bibitem{Diacu}
Florin Diacu.
\newblock {\em Relative equilibria of the Curved $N$--Body Problem}.
\newblock Atlantis Press, first edition, 2012.

\bibitem{Pe2}
Florin Diacu, Ernesto P\'{e}rez-Chavela, and J.~Guadalupe Reyes~Victoria.
\newblock An intrinsic approach in the curved {$n$}-body problem: the negative
  curvature case.
\newblock {\em J. Differential Equations}, 252(8):4529--4562, 2012.

\bibitem{DPS12}
Florin Diacu, Ernesto P\'{e}rez-Chavela, and Manuele Santoprete.
\newblock The {$n$}-body problem in spaces of constant curvature. {P}art {I}:
  {R}elative equilibria.
\newblock {\em J. Nonlinear Sci.}, 22(2):247--266, 2012.

\bibitem{Routh}
Routh E.J.
\newblock On laplace’s three particles, with a supplement on the stability of
  steady motion.
\newblock {\em Proc. London Math. Soc.}, 6(86), 1875.

\bibitem{Fo}
M.~Fontaine and C.~Garc\'{\i}a-Azpeitia.
\newblock Braids of the $n$-body problem by cabling a body in a central
  configuration.
\newblock {\em arXiv:1906.07702}.

\bibitem{Gas}
Gascheau G..
\newblock Examen d’une classe d’équations différentielles et applications
  à un cas particulier du problème des trois corps.
\newblock {\em C. R. Acad. Sci.}, 16:393, 1843.

\bibitem{GaIz11}
C.~Garc\'{\i}a-Azpeitia and J.~Ize.
\newblock Global bifurcation of polygonal relative equilibria for masses,
  vortices and d{NLS} oscillators.
\newblock {\em J. Differential Equations}, 251(11):3202--3227, 2011.

\bibitem{GarciaN-Marrero}
P\'erez-Chavela E. Rodr\'iguez-Olmos~M. Garc\'ia-Naranjo~L.C., Marrero~J.C.
\newblock Classification and stability of relative equilibria for the two-body
  problem in the hyperbolic space of dimension 2.
\newblock {\em J. Diff. Eq.}, 260:6375--6404, 2016.

\bibitem{Do}
F.~J. Mu\~{n}oz Almaraz, E.~Freire, J.~Gal\'{a}n, E.~Doedel, and
  A.~Vanderbauwhede.
\newblock Continuation of periodic orbits in conservative and {H}amiltonian
  systems.
\newblock {\em Phys. D}, 181(1-2):1--38, 2003.

\bibitem{Pe}
Ernesto P\'{e}rez-Chavela and J.~Guadalupe Reyes-Victoria.
\newblock An intrinsic approach in the curved {$n$}-body problem. {T}he
  positive curvature case.
\newblock {\em Trans. Amer. Math. Soc.}, 364(7):3805--3827, 2012.

\bibitem{Er-Juan2}
S\'anchez-Cerritos~J. P\'erez-Chavela~E.
\newblock Hyperbolic relative equilibria for the negative curved n-body
  problem.
\newblock {\em Commun. Nonlinear Sci. Number Simulat}, 67:460--479, 2019.

\bibitem{Er-Juan1}
S\'anchez-Cerritos~J. P\'erez-Chavela~E.
\newblock Regularization of the restricted (n+1)-body problem on curved spaces.
\newblock {\em Astr. Space Sci.}, 364:article 170, 2019.

\bibitem{Saari}
Donald~G. Saari.
\newblock {\em Collisions, Rings and Other Newtonian $N$--Body Problems}.
\newblock CBMS Regional Conference Series in Mathematics, 104, AMS, first
  edition, 2005.

\end{thebibliography}
\bibliographystyle{plain}

\end{document}